\newcommand{\DS}[0]{{\it{Z}}}
\newcommand{\NN}{{[n]}}
\newcommand{\cPsi}{\Omega}
\newcommand{\indic}{\Phi}
\newcommand{\clip}{{\mbox{clip}}}
\newcommand{\K}[0]{{\it C}}
\newcommand{\Kidx}[0]{{\it c}}
\newcommand{\vsubset}[2]{#1_{[#2]}}
\newcommand{\dom}{\textrm{dom}}
\newcommand{\mysgn}{\textrm{sgn}}
\newcommand{\successor}{\textrm{succ}}
\newcommand{\predecessor}{\textrm{pred}}
\newcommand{\TT}[0]{{\tau}}
\newcommand{\R}{\mathbf{R}}
\newcommand{\Prob}{\mathbf{Prob}}
\newcommand{\eqdef}{:=}
\newcommand{\vc}[2]{#1^{(#2)}}
\newcommand{\bvc}[2]{#1^{(#2)}}
\newcommand{\vt}[2]{#1_{#2}}
\newcommand{\mrow}[2]{#1_{{#2}{:}}}
\newcommand{\mcol}[2]{#1_{{:}{#2}}}
\newcommand{\mel}[3]{#1_{{#2},{#3}}}
\newcommand{\Zj}[2]{\vc{Z_{#2}}{#1}}
\newcommand{\ncs}[2]{\|#1\|^2_{(#2)}}
\newcommand{\calJ}{\mathcal{J}}
\newcommand{\RT}{\mathcal{T}}
\newcommand{\BO}[1]{\mathcal {O}\left({#1}\right)}
\newcommand{\nbp}[2]{\|#1\|_{(#2)}}   
\newcommand{\nbd}[2]{\|#1\|_{(#2)}^*} 
\newcommand{\lf}{\mathcal L}
\renewcommand{\E}{\mathbf{E}}
\newcommand{\U}{U}
\newcommand{\N}{N}
\newcommand{\Partc}{\it{P}}
\newcommand{\setn}{[n]}
\newcommand{\Lip}{L}
\newcommand{\ve}[2]{\langle #1 ,  #2 \rangle}
 \newtheorem{assumption}[theorem]{Assumption}
\let\la=\langle
\let\ra=\rangle
\begin{document}

\title*{Distributed Block Coordinate Descent  for Minimizing Partially Separable Functions}


\titlerunning{Distributed Block Coordinate Descent  for Minimizing Partially Separable Functions}

\author{Jakub Mare\v{c}ek, Peter Richt\'arik and Martin Tak\'a\v{c}}

\institute{ Jakub Mare\v{c}ek
\at IBM Research -- Ireland, Dublin, Ireland, \email{jakub.marecek@ie.ibm.com}
\and Peter Richt\'arik
\at School of Mathematics, University of Edinburgh,  UK, \email{peter.richtarik@ed.ac.uk}
\and
Martin Tak\'a\v{c} 
\at Dept.\ of Industrial \& Systems Engineering, Lehigh University,  USA, \email{takac.mt@gmail.com}\\[3mm]
The first author was supported by EPSRC grant EP/I017127/1 (Mathematics for Vast Digital Resources) in 2012 and
by the EU FP7 INSIGHT project (318225) subsequently.
The second author was supported by EPSRC grant EP/I017127/1.
The third author was supported by the Centre for Numerical Algorithms and Intelligent Software, funded by EPSRC grant EP/G036136/1 and the Scottish
Funding Council.
}


%
%
\maketitle

\abstract{
A distributed randomized block coordinate descent method for minimizing a convex function of a huge number of variables is proposed. 
The complexity of the method is analyzed under the assumption that the smooth part of the objective function is partially block separable.
The number of iterations required 
 is bounded by a function of the error and the degree of separability,
 which extends the results in \cite{RT:PCDM} to a distributed environment. 
Several approaches to the distribution and synchronization of the computation across a cluster of multi-core computer are described and promising computational results are provided.
}

\section{Introduction}

With the ever increasing availability of data comes the need to solve ever larger instances of problems in data science and  machine learning, many of which turn out to be convex optimization problems of enormous dimensions. A single  machine is often unable to store the complete data in its main memory. This suggests the need for efficient algorithms, which can benefit from distributing the data and computations across many computers. 

In this paper, we study optimization problems of the form:
\begin{equation}\label{eq:P}
\min_{x\in \R^N} \left[ F(x) \eqdef f(x) + \cPsi(x)\right],
\end{equation}
where $f$ is a smooth, convex and partially block separable function, and $\cPsi$ is  a possibly non-smooth, convex, block separable, and ``simple'' extended real valued function. The technical definitions of these terms are given in Section~\ref{sec:block_structure}.

\subsection{Contributions}

We propose and study the performance of a {\em distributed block coordinate descent method} applied to problem \eqref{eq:P}.

In our method, the blocks of coordinates are first partitioned among  $\K$ computers of a cluster. Likewise, data associated with these blocks are partitioned accordingly and stored in a distributed way.
 In each of the subsequent iterations, each computer chooses $\TT$ blocks out of those stored locally, uniformly at random. Then, each computer computes and applies an update to the selected blocks, in parallel, out of information available to it locally.  An update, which happens to be the residual in data-fitting problems, is then transmitted to other computers, which receive it either by the beginning of the next iteration or at some later time.
In the former case, we denote the methods ``sychronous'' and we analyse them in detail. In the latter case, we denote the methods ``asynchronous'' and we include them for the sake of comparison in Section~\ref{para:distributed-computation}.

The main contributions of this paper are, in no particular order:
\begin{enumerate}
\item \textbf{Partial separability.}  This is the first time such a distributed block-coordinate descent  method is analyzed under the assumption that $f$ is partially separable.


\item \textbf{New step-length.} Our method and analysis is based on an  expected separable overapproximation (ESO) inequality for partially separable functions and distributed samplings in Theorem~\ref{thm:esoGeneral} in Section~\ref{sec:ESO}. The length of the step we take in each iteration is given by the optimum of this ESO.

\item {\bf Iteration complexity.} We show that the iteration complexity of the method depends on the degree of block separability of $f$: the more separable the instance, the fewer iterations the method requires. The complexity results are stated in two theorems in Section~\ref{sec:convergence} and are of the order of $O(\log (1/\epsilon))$ for strongly 
convex $F$ and $O(1/\epsilon)$ for general convex $F$.
At the same time, the separability also reduces the run-time per iteration.

\item {\bf Efficient implementation.} When we replace the natural synchronous communications between computers, as analysed in Section~\ref{sec:convergence}, with asynchronous communication, we obtain a major speed-up in the computational performance. An efficient open-source implementation of both synchronous and asynchronous methods is available as part of the package \url{http://code.google.com/p/ac-dc/}.

\end{enumerate}

Our method and results are valid not only for a cluster setting, where there really are $\K$ computers which do not share any memory,
and hence have to communicate by sending messages to each other, but also for computers using the Non-Uniform Memory Access (NUMA) architecture,
where the memory-access time depends on the memory location relative to a processor,
and accessing local memory is much faster than accessing memory elsewhere.
NUMA architectures are increasingly more common in multi-processor machines.


\subsection{Related work}

Before we proceed, we give a brief overview of some existing literature on coordinate descent methods.
For further references, we refer the reader to \cite{Bertsekas1989,RT:PCDM,fercoq2013accelerated}. 

{\bf Block-coordinate descent.}
  Block-coordinate descent is a simple iterative optimization strategy, where
two subsequent iterates differ only in a single block of coordinates. In a very common special case, 
each block consists of a single coordinate. The choice of the block can be deterministic, e.g., cyclic (\cite{Saha10finite}), greedy (\cite{RT:TTD2011}), or randomized. 
Recent theoretical guarantees for randomized coordinate-descent algorithms can be found in \cite{Nesterov:2010RCDM,RT:UCDC,fercoq2013smooth,lu2013complexity,necoara2013distributed,lee2013efficient}.
Coordinate descent algorithms are also closely related to 
coordinate relaxation, linear and non-linear Gauss-Seidel methods, subspace correction, and domain decomposition (see \cite{Bertsekas-Book} for references). For  classical references on non-randomized variants, we refer to the work of Tseng \cite{Tseng:CCMCDM:Smooth, Tseng:CGDM:Nonsmooth, Tseng:CGDMLC:Nonsmooth, Tseng:CBCDM:Nonsmooth}.


{\bf Parallel block-coordinate descent.}
Clearly, one can parallelize coordinate descent by updating several blocks in parallel. The related complexity issues were studied by a number of authors. Richt\'{a}rik and Tak\'{a}\v{c} studied a broad class of parallel methods for the same problem we study in this paper, and introduced the concept of ESO \cite{RT:PCDM}. The complexity was improved by Tappenden et al. \cite{rachael:Improved}.
An efficient accelerated version was introduced by Fercoq and Richt\'{a}rik \cite{fercoq2013accelerated} and an inexact version was studied in \cite{T2}. An asynchronous variant was studied by Liu et al. \cite{liu2013asynchronous}.
A non-uniform sampling and a method for dealing with non-smooth functions were described in \cite{richtarik2013optimal} and \cite{fercoq2013smooth}, respectively. Further related work can be found in \cite{scherrer2012feature,T1,zhao2014stochastic,hogwild}.

{\bf Distributed block-coordinate descent.}
Distributed coordinate descent was first proposed
by Bertsekas and Tsitsiklis \cite{Bertsekas1989}.
The literature on this topic was rather sparse, c.f. \cite{DCD}, 
until the research presented in this paper raised the interest, 
which lead to the analyses of Richt\'{a}rik and Tak\'{a}\v{c} \cite{richtarik2013distributed} and 
Fercoq et al. \cite{fercoq2014fast}.
These papers do not consider blocks, and specialise our results to  convex functions admitting a quadratic upper bound. 

In the machine-learning community, distributed algorithms have been studied for particular problems, e.g., training of support vector machines \cite{6122723}.
Google \cite{Chang_psvm:parallelizing} developed a library called PSVM, where parallel  row-based incomplete Cholesky factorization is employed
in an interior-point method. A MapReduce-based distributed algorithm for SVM was found to be effective in automatic image annotation \cite{Alham20112801}.
Nevertheless, none of these papers use coordinate descent.

\section{Notation and assumptions}\label{sec:block_structure}

In this section, we introduce the notation used in the rest of the paper and state our assumptions formally. 
We aim to keep our notation consistent with that of Nesterov  \cite{Nesterov:2010RCDM} and Richt\'{a}rik \& Tak\'{a}\v{c} \cite{RT:PCDM}.

{\bf Block structure.} We  decompose  $\R^\N$ into $n$ subspaces as follows. Let $\U\in \R^{\N\times \N}$ be the $\N\times \N$ identity matrix and further let $\U = [\U_1,\U_2,\dots,\U_n]$ be a column decomposition of $\U$ into $n$ submatrices, with $\U_i$ being of size $\N \times \N_i$, where $\sum_i \N_i = \N$.
It is easy to observe that any vector $x\in \R^\N$ can be written uniquely as $x = \sum_{i=1}^n \U_i x^{(i)}$, where $x^{(i)} \in \R^{\N_i}$.
Moreover, $x^{(i)}=\U_i^T x$.
In view of the above, from now on we  write $x^{(i)}\eqdef U_i^T x \in \R^{\N_i}$, and call $x^{(i)}$ the \emph{block} $i$ of $x$.  

{\bf Projection onto a set of blocks.} Let us denote 
$\{1, 2, \ldots, n\}$ by $[n]$, a set of blocks $S\subseteq \NN$, $x\in \R^\N$, and let $x_{[S]}$ be the vector in $\R^\N$ whose blocks $i\in S$ are identical to those of $x$, but whose other blocks are zeroed out. Block-by-block, we thus have
$(x_{[S]})^{(i)} = x^{(i)}$ for $ i\in S$ and  $(x_{[S]})^{(i)} =0 \in\R^{\N_i}$, otherwise. It will be more useful to us however to write
\begin{equation}\label{eq:lllop09}x_{[S]} \eqdef \sum_{i\in S} U_i x^{(i)},\end{equation}
where we adopt the convention that if $S=\emptyset$, the sum is equal  $0\in \R^\N$.

{\bf Norms.} Spaces $\R^{\N_i}$, $i \in \NN$, are equipped with a pair of conjugate norms: $\|t\|_{(i)}$ and $\|t\|_{(i)}^* \eqdef \max_{\|s\|_{(i)}\leq 1} \ve{s}{t}$, $t\in \R^{\N_i}$. For $w\in \R^n_{>0}$, where $\R_{>0}$ is a set of positive real numbers, define a pair of conjugate norms in $\R^\N$ by
\begin{equation}\label{eq:norms} \|x\|_w = \left[\sum_{i=1}^n w_i \ncs{\vc{x}{i}}{i}\right]^{1/2}, \quad
\|y\|_w^* \eqdef \max_{\|x\|_w\leq 1} \ve{y}{x} = \left[\sum_{i=1}^n w_i^{-1} ( \nbd{y^{(i)}}{i})^2\right]^{1/2}.\end{equation}

We shall assume throughout the paper that $f$ has the following properties.

\begin{assumption}[Properties of $f$] \label{ass:f}
Function $f: \R^\N\to \R$ satisfies:
\begin{enumerate}
\item \textbf{Partial separability.} Function $f$ is of the form 
\begin{equation}\label{eq:iuhs8sis}
f(x) = \sum_{J\in \calJ} f_J (x),
\end{equation}
where  $\calJ$ is a collection of subsets of $\setn$ and function $f_J$ depends on $x$ through blocks $x^{(i)}$ for $i \in J$ only. The quantity
$\omega \eqdef \max_{J \in \calJ} |J|$ is the  degree
of separability of $f$.
\item \textbf{Convexity.} Functions $f_J$, $J \in \calJ$ in \eqref{eq:iuhs8sis} are convex.
\item \textbf{Smoothness.} The gradient of $f$ is block  Lipschitz, uniformly in $x$, with positive constants $\Lip_1,\dots,\Lip_n$. That is,  for all $x\in \R^\N$, $i\in \NN$ and $t\in \R^{\N_i}$,
\begin{equation}\label{eq:f_iLipschitzder}
 \nbd{\nabla_i f(x+\U_i t)-\nabla_i f(x)}{i} \leq \Lip_i \nbp{t}{i},
  \end{equation}
where $\nabla_i f(x)  \eqdef (\nabla f(x))^{(i)} = \U^T_i \nabla f(x) \in \R^{\N_i}$.

\end{enumerate}
\end{assumption}

A few remarks are in order:
\begin{enumerate}
\item Note that every function $f$ is trivially of the form 
\eqref{eq:iuhs8sis}: we can always assume that $\calJ$ contains just the single set $J=\setn$ and let $f_J = f$. In this case we would have $\omega=n$. However, many functions appearing in applications can naturally be decomposed as a sum of a number of functions each of which depends on a small number of blocks of $x$ only. That is, many functions have degree of separability  $\omega$ that is much smaller than $n$. 


\item Note that since $f_J$ are convex, so is $f$. While it is possible to remove this assumption and provide an analysis in the non-convex case, this is beyond the scope of this paper.

\item
An important consequence of \eqref{eq:f_iLipschitzder} is the following standard inequality \cite{NesterovBook}:
\begin{equation}\label{eq:Lipschitz_ineq}f(x+\U_i t) \leq f(x) + \ve{\nabla_i f(x)}{t} + \tfrac{\Lip_i}{2}\nbp{t}{i}^2.\end{equation}
\end{enumerate}

\begin{assumption}[Properties of $\cPsi$] \label{ass:reularizer}
We assume that $\cPsi: \R^\N\to \R\cup \{+\infty\}$ is (block) separable, i.e., that it can be decomposed as follows:
\begin{equation}\label{eq:Psi_block_def}
  \cPsi(x)=\sum_{i=1}^n \cPsi_i(x^{(i)}),
\end{equation}
where the functions $\cPsi_i:\R^{\N_i}\to \R \cup \{+\infty\}$ are convex and closed.
\end{assumption}


\section{Distributed block coordinate descent method}
\label{sec:algorithm}

In this section we describe our distributed block coordinate descent method (Algorithm~\ref{alg:distributedEx}). It is designed to solve convex optimization problems of the form \eqref{eq:P}, where the data describing the instance are so large that it is impossible to store these in  memory of a single computer. 

\begin{algorithm}[th!]
\caption{Distributed Block Coordinate Descent} 
\label{alg:distributedEx}
\DontPrintSemicolon
choose $\vt{x}{0} \in \R^N$\;
$k \longleftarrow 0$\;
\WhileNotSatisfied{}{
  $\vt{x}{k+1} \longleftarrow \vt{x}{k}$\;
  \ParallelCForEach{ $\Kidx \in \{ 1, \ldots, \K \} $}{
    sample a set of coordinates $\Zj{\Kidx}{k} \subseteq \vc{\Partc}{\Kidx}$ of size $\tau$, uniformly at random 

    \ParallelTForEach{ $i \in \Zj{\Kidx}{k}$}{
      compute an update $\bvc{h}{i}(\vt{x}{k})$\;
      $\vt{x}{k+1} \longleftarrow \vt{x}{k+1} + U_i \bvc{h}{i}(\vt{x}{k})$ \; 
    }
  }
  $k \longleftarrow k + 1$
}
\end{algorithm}



\textbf{Pre-processing.} Before the method is run,  the set of blocks is partitioned into $\K$ sets $P^{(c)}$, $c=1,2,\dots,C$. Each computer ``owns'' one partition and will only store and update blocks of $x$ it owns. That is, the blocks $i\in P^{(c)}$ of $x$ are stored on and updated by computer $c$ only.  Likewise,  ``all data'' relevant to these blocks are stored on computer $c$. 
We deal with the issues of data distribution and communication only in Section~\ref{sec:implementation}.

\textbf{Distributed sampling of blocks.} In Step 6 of Algorithm ~\ref{alg:distributedEx}, each computer $\Kidx$ chooses  a random subset $Z_k^{(c)}$ of blocks from its partition $\vc{\Partc}{\Kidx}$.  We assume that $|Z_k^{(c)}|=\tau$, and that it  is chosen uniformly at random from all subsets of $P^{(c)}$ of cardinality $\tau$. Moreover, we assume the choice is done independently from all history and from what the other computers do in the same iteration. Formally, we say that the set of blocks chosen by all computers in iteration $k$, i.e., 
$Z_k = \cup_{c=1}^C Z_k^{(c)}$,
 is a $(C,\tau)$-{\em distributed sampling.} 
 
For easier reference in the rest of the paper, we formalize the setup described above as Assumption~\ref{asm:partitions} at the end of this section (where we drop the subscript $k$, since the samplings are independent of $k$).

\textbf{Computing and applying block updates.} In Steps 7-9, each computer $c$ first computes and then applies updates to blocks $i \in Z_k^{(c)}$ to $x_k$. This is done on each computer in parallel. Hence, we have two levels of parallelism: across the nodes/computers and within each computer. The update to block $i$ is denoted by $\bvc{h}{i}(\vt{x}{k})$ and arises as a solution of an optimization problem in the lower dimensional space $R^{N_i}$:
 \begin{equation}\label{eq:h_definitionSeparablePart}
h^{(i)}(x_k) \leftarrow \arg\min_{t \in \R^{\N_i}}
\la \nabla_i f(x_k), t\ra  +\frac{\beta w_i}{2} \ncs{t}{i} +\cPsi_i(\vc{x_k}{i}+t).
 \end{equation}
Our method is most effective when this optimization problem has a closed form solution, which is the case in many applications. Note that \emph{nearly all} information that describes problem \eqref{eq:h_definitionSeparablePart} for $i \in P^{(c)}$ is available at node $c$. In particular, $x_k^{(i)}$ is stored on $c$. Moreover, we can store the description of $\Omega_i$, norm $\|\cdot\|_{(i)}$ and the pair $(\beta,w_i)$, for $i \in P^{(c)}$,  on node $c$ and only there. 

Note that we did not specify yet the values of the parameters $\beta$ and $w = (w_1,\dots,w_n)$. These depend on the properties of $f$ and sampling $\hat{Z}$. We shall give theoretically justified formulas for these parameters in Section~\ref{sec:ESO}.

\textbf{Communication.} Finally, note that in order to find $h^{(i)}(x_k)$, each computer needs to be able to compute $\nabla_i f(x_k)$ for blocks $i \in Z_k^{(c)}\subseteq P^{(c)}$. This is the only information that an individual computer can \emph{not} obtain from the data stored locally. We shall describe an efficient communication protocol that allows each node to compute $\nabla_i f(x_k)$ in Section~\ref{sec:implementation}.

 \begin{assumption}[Distributed sampling]\label{asm:partitions} We make the following assumptions:
\begin{enumerate}
\item \textbf{Balanced partitioning.} The set of blocks is partitioned into $\K$  groups $\vc{\Partc}{1},\dots, \vc{\Partc}{\K}$, each of size $s\eqdef n/\K$. That is,
\begin{enumerate}
\item $\{1,2,\dots,n\} = \cup_{\Kidx=1}^\K \vc{\Partc}{\Kidx}$,
\item $\vc{\Partc}{c'}\cap \vc{\Partc}{c''} = \emptyset$ for $c'\neq c''$,
\item $|\vc{\Partc}{c}| =:s$ for all $c$.
\end{enumerate}
\item \textbf{Sampling.} For each $c \in\{1,\dots,C\}$, the set $\hat{Z}^{(c)}$ is a random subset of $P^{(c)}$ of size $\tau \in \{1,2,\dots,s\}$, where each subset of size $\tau$ is chosen with equal probability.
\end{enumerate}

We refer call the random set-valued mapping $\hat{\DS} \eqdef \cup_{c=1}^C \hat{\DS}^{(c)}$ by the name $(C,\tau)$-distributed sampling.
\end{assumption}

\section{Expected separable overapproximation (ESO)}
\label{sec:ESO}

The following concept was first  defined in \cite{RT:PCDM}. It plays a key role in the complexity analysis of randomized coordinate descent methods.

\begin{definition}[ESO] Let $\hat \DS$ be any uniform sampling, i.e., a random sampling of blocks for which $\Prob(i \in \hat{\DS}) = \Prob(j \in \hat{\DS})$ for all $i,j \in \setn$. We say that function $f$ admits an ESO 
 with respect to sampling $\hat\DS$, with parameters $\beta>0$ and $w\in\R^n_{>0}$, if the following inequality holds for all $x, h \in \R^\N$:
\begin{equation}\label{eq:ss8587sj}
  \E[f(x+\vsubset{h}{\hat \DS})]
  \leq f(x) + \tfrac{\E[|\hat \DS|]}{n} \left(\la \nabla f(x), h\ra + \tfrac{\beta}{2} \|h\|_w^2    \right).
\end{equation}
For simplicity, we will sometimes write  $(f,\hat{\DS})\sim ESO(\beta,w)$.
\end{definition}

In the rest of this section we derive an ESO inequality for $f$ satisfying Assumption~\ref{ass:f} (smooth, convex, partially separable) and for sampling $\hat{Z}$ satisfying Assumption~\ref{asm:partitions} ($(C,\tau)$-distributed sampling). This has not been done before in the literature. In particular, we give simple closed-form formulas for parameters $\beta$ and $w$, which we shall use in Section~\ref{sec:convergence} to shed light on the performance of the method.

We first need to establish an auxiliary result. We use $[n]$ to denote 
$\{ 1, 2, \ldots, n \}$.

\begin{lemma}Let $\hat{\DS} = \cup_{c=1}^\K \hat{\DS}^{(c)}$ be a $(\K,\tau)$-distributed sampling. Pick $J \subseteq [n]$ and assume that $|\vc{\Partc}{\Kidx} \cap J |= \xi$ for some $\xi\geq 1$ and all $\Kidx$. Let $\kappa = \kappa(|\hat{\DS}\cap J|,i)$ be any function that depends on $|\hat{\DS}\cap J|$ and $i \in \setn$ only.
Then
\begin{equation}
\label{eq:jd9876gdh}\E \left[ \sum_{i \in \hat{\DS}\cap J} \kappa(|\hat{\DS} \cap J|,i)\right] = \E\left[ \frac{|\hat{\DS}\cap J|}{\K \xi} \sum_{i \in J} \kappa(|\hat{\DS} \cap J|,i)\right]. \end{equation}
\end{lemma}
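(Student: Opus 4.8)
The plan is to reduce \eqref{eq:jd9876gdh} to an \emph{exchangeability} property of the blocks of $J$ under the sampling $\hat{\DS}$. First I would turn the random range of summation into an indicator, writing $\sum_{i\in\hat{\DS}\cap J}\kappa(|\hat{\DS}\cap J|,i)=\sum_{i\in J}\mathbf{1}_{[i\in\hat{\DS}]}\,\kappa(|\hat{\DS}\cap J|,i)$. Taking expectations, using linearity, and conditioning on the value $m$ of $|\hat{\DS}\cap J|$ gives $\E\bigl[\sum_{i\in\hat{\DS}\cap J}\kappa(|\hat{\DS}\cap J|,i)\bigr]=\sum_m\sum_{i\in J}\kappa(m,i)\,p_{i,m}$, where $p_{i,m}\eqdef\Prob(i\in\hat{\DS},\,|\hat{\DS}\cap J|=m)$. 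On the other hand, since $\sum_{i\in J}\kappa(m,i)$ is a deterministic function of $m$, the right-hand side of \eqref{eq:jd9876gdh} equals $\sum_m\tfrac{m}{\K\xi}\,\Prob(|\hat{\DS}\cap J|=m)\sum_{i\in J}\kappa(m,i)$. So the whole lemma comes down to showing that $p_{i,m}$ does not depend on $i\in J$.

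Granting that, write $p_m$ for the common value of $p_{i,m}$ over $i\in J$. Summing the defining identity over $i\in J$ yields $|J|\,p_m=\E\bigl[|\hat{\DS}\cap J|\,\mathbf{1}_{[|\hat{\DS}\cap J|=m]}\bigr]=m\,\Prob(|\hat{\DS}\cap J|=m)$; since $J=\cup_{\Kidx=1}^{\K}(\vc{\Partc}{\Kidx}\cap J)$ is a disjoint union of $\K$ sets of size $\xi$, we have $|J|=\K\xi\geq 1$ (using $\xi\geq 1$), hence $p_m=\tfrac{m}{\K\xi}\Prob(|\hat{\DS}\cap J|=m)$. Substituting $p_{i,m}=p_m$ into the expansion of the left-hand side above and recollecting the sum over $m$ as an expectation reproduces exactly the right-hand side of \eqref{eq:jd9876gdh}.

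The real content is the claim $p_{i,m}=p_{i',m}$ for all $i,i'\in J$, and I would prove it by exhibiting, for each such pair, an \emph{involutive} bijection $\pi$ of $\setn$ that swaps $i$ and $i'$, satisfies $\pi(J)=J$, and under which $\hat{\DS}$ and $\pi(\hat{\DS})$ have the same distribution. Since $\pi(J)=J$ the event $\{|\hat{\DS}\cap J|=m\}$ is $\pi$-invariant, so applying the equality in law to the event $\{i\in\hat{\DS}\}\cap\{|\hat{\DS}\cap J|=m\}$ gives $p_{i,m}=p_{i',m}$. If $i,i'$ lie in the same block $\vc{\Partc}{\Kidx}$, take $\pi$ to be the transposition of $i$ and $i'$: equality in law holds because $\hat{\DS}^{(\Kidx)}$ is a uniformly random $\tau$-subset of $\vc{\Partc}{\Kidx}$ (hence exchangeable in its elements) and the other $\hat{\DS}^{(c)}$ are untouched. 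If $i\in\vc{\Partc}{\Kidx}$ and $i'\in\vc{\Partc}{\Kidx'}$ with $\Kidx\neq\Kidx'$, use that all blocks have the same size $s$ and that $|\vc{\Partc}{\Kidx}\cap J|=|\vc{\Partc}{\Kidx'}\cap J|=\xi$ to choose an involution $\pi$ that swaps $\vc{\Partc}{\Kidx}$ with $\vc{\Partc}{\Kidx'}$ (fixing every other block pointwise) and carries $\vc{\Partc}{\Kidx}\cap J$ onto $\vc{\Partc}{\Kidx'}\cap J$ with $\pi(i)=i'$; because the per-computer samplings are identically distributed and mutually independent, relabeling these two computers leaves the law of $\hat{\DS}$ unchanged. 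Thus the hypotheses that all blocks have equal size and that $|\vc{\Partc}{\Kidx}\cap J|=\xi$ for every $\Kidx$ are exactly what makes $J$ a single orbit under law-preserving relabelings. The only mildly delicate point is verifying that these relabelings are genuinely measure-preserving; the rest is bookkeeping.
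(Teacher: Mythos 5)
Your proof is correct, but it packages the symmetry differently from the paper, and the comparison is worth recording. The paper works with a two-level tower of conditional expectations: it conditions on the per-computer counts $\vc{\zeta}{\Kidx}=|\hat{\DS}^{(\Kidx)}\cap \vc{\Partc}{\Kidx}\cap J|$, uses the fact that given $\vc{\zeta}{\Kidx}$ the selected elements form a uniform $\vc{\zeta}{\Kidx}$-subset of $\vc{\Partc}{\Kidx}\cap J$ (yielding the factor $\vc{\zeta}{\Kidx}/\xi$), and then uses exchangeability of the counts across computers to replace $\vc{\zeta}{\Kidx}$ by its conditional mean $\zeta/\K$ given the total $\zeta$. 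You instead collapse everything into the single claim that $p_{i,m}=\Prob(i\in\hat{\DS},\,|\hat{\DS}\cap J|=m)$ is constant over $i\in J$, prove that claim by exhibiting law-preserving relabelings of $\setn$ (a within-partition transposition, and a swap of two whole partitions respecting $J$, which is exactly where the hypotheses $|\vc{\Partc}{\Kidx}|=s$ and $|\vc{\Partc}{\Kidx}\cap J|=\xi$ for all $\Kidx$ enter), and then recover the constant $\tfrac{m}{\K\xi}\Prob(|\hat{\DS}\cap J|=m)$ for free by summing $p_{i,m}$ over $i\in J$ rather than by computing any hypergeometric quantity. Your route buys a cleaner handling of the degenerate event $\hat{\DS}\cap J=\emptyset$ (the indicator rewriting makes it automatic, whereas the paper's nested conditioning is written only for nonempty intersections) and it isolates the one genuinely probabilistic fact — single-orbit exchangeability of $J$ — from the bookkeeping; the paper's route has the advantage of exposing the per-computer structure that is reused immediately afterwards in the ESO proof, where the decomposition $\theta_{J,\hat{\DS}}=\sum_{\Kidx}\theta_{J,\hat{\DS}^{(\Kidx)}}$ and the law of each summand are needed anyway. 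The two symmetry facts you invoke (uniformity of each $\hat{\DS}^{(c)}$ over $\tau$-subsets of a set of size $s$, and the i.i.d.\ structure across computers) are exactly the ones the paper uses implicitly, so nothing is missing; your sketch of why the relabelings are measure-preserving is adequate as stated.
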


\begin{proof} 
Let us denote by $\vc{J}{\Kidx} = J\cap \vc{\Partc}{\Kidx}$, $\zeta = |\hat \DS \cap J|$ and $\vc{\zeta}{\Kidx} = |\hat \DS \cap \vc{J}{\Kidx}|$. Then
\begin{eqnarray*}
\E \left[ \sum_{i \in \hat{\DS}\cap J} \kappa(\zeta,i)\right]
&=&
\E\left[\E\left[ \sum_{i \in \hat{\DS}\cap J} \kappa(\zeta,i)
 \;|\;    \zeta
\right]
\right]
\\
&=&
\E\left[\E\left[
\E \left[ \sum_{i \in \hat{\DS}\cap J} \kappa(\sum_{\Kidx = 1}^\K \vc{\zeta}{\Kidx},i)
 \;|\;  \vc{\zeta}{1},\dots,\vc{\zeta}{\K},  \sum_{\Kidx = 1}^\K \vc{\zeta}{\Kidx} = \zeta
\right]
 \;|\;  \zeta
\right]
\right]
\\
&=&
\E\left[\E\left[
\E \left[ \sum_{\Kidx=1}^\K
  \sum_{i \in \hat{\DS}^{(\Kidx)}\cap \vc{J}{\Kidx}} \kappa(\zeta,i)
 \;|\;  \vc{\zeta}{1},\dots,\vc{\zeta}{\K}
\right]
 \;|\;  \sum_{\Kidx = 1}^\K \vc{\zeta}{\Kidx} = \zeta
\right]
\right]
\\
&=&
\E\left[\E\left[
\sum_{\Kidx=1}^\K
 \frac{\vc{\zeta}{\Kidx}}{\xi} 
  \sum_{i \in  \cap \vc{J}{\Kidx}} \kappa(\zeta,i)
   \;|\;  \sum_{\Kidx = 1}^\K \vc{\zeta}{\Kidx} = \zeta
\right]
\right]
\\
&=&
\E\left[
\sum_{\Kidx=1}^\K
 \frac{ \zeta }{\xi \K} 
  \sum_{i \in  \cap \vc{J}{\Kidx}} \kappa(\zeta,i)
\right]
=
\E\left[
 \frac{ \zeta }{\xi \K} 
  \sum_{i \in    J} \kappa(\zeta,i)
\right].\qquad \qquad\qed
\end{eqnarray*}

\end{proof}


The main technical result of this paper follows. This is a generalization of a result from \cite{RT:PCDM} for partially separable $f$ and $\tau$-nice sampling to the distributed ($c>1$) case. Notice that for $\K=1$ we have  $\xi=\omega$.

\begin{theorem}[ESO]
\label{thm:esoGeneral}
Let $f$ satisfy Assumption~\ref{ass:f} and 
$\hat\DS$ satisfy Assumption \ref{asm:partitions}. Let \footnote{Note that $\xi \in  \{
\lceil\tfrac\omega\K\rceil,\dots,\omega\}$.
} $\xi\eqdef \max \{|\vc{\Partc}{\Kidx}\cap J| \;:\; c\in \{1,\dots,\K\}, \;J\in \calJ \}$.
Then $(f, \hat \DS)$ admits ESO with parameters $\beta$ and $w$ given by 
\begin{equation}
 \label{eq:betaForGeneralCase}
  \beta =1 +\frac{(\xi-1)(\TT-1)}{\max\{1,s-1\}}
 +    (\K-1)
   \frac{\xi \TT}{s },
\end{equation}
and $w_i = L_i$, $i=1,2,\dots,n$.
\end{theorem}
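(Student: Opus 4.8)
\emph{Proof proposal.} I would follow the scheme used in \cite{RT:PCDM} for $\TT$-nice samplings, with the auxiliary Lemma above supplying the symmetry that a distributed sampling lacks. Begin with a harmless reduction. Enlarging any $J\in\calJ$ to a superset leaves $f_J$ (trivially) a function of the blocks of that superset, does not change $f$ or the constants $\Lip_i$, and only weakens the ESO inequality \eqref{eq:ss8587sj} to be proved; since $\xi\le s$, I may therefore assume that $|\vc{\Partc}{\Kidx}\cap J| = \xi$ for every computer $\Kidx$ and every $J\in\calJ$. Two things are gained: the Lemma applies verbatim, and the random variables $|\hat\DS\cap J|$, $J\in\calJ$, become identically distributed (each is a sum of $\K$ independent hypergeometric variables counting, per computer, how many of the $\xi$ ``interesting'' blocks fall among the $\TT$ sampled out of $s$).

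\emph{Deterministic step.} Fix $x,h$ and a realisation $S$ of $\hat\DS$ and put $m_J\eqdef|S\cap J|$. By partial separability \eqref{eq:iuhs8sis}, $f(x+h_{[S]}) = \sum_{J\in\calJ}f_J(x+h_{[S\cap J]})$. For $J$ with $m_J\ge 1$, convexity of $f_J$ applied to the representation $x+h_{[S\cap J]} = \tfrac1{m_J}\sum_{i\in S\cap J}(x+m_J U_i h^{(i)})$ gives
\begin{equation*}
  f_J(x+h_{[S\cap J]}) - f_J(x)\ \le\ \frac1{m_J}\sum_{i\in S\cap J}\bigl(f_J(x+m_J U_i h^{(i)}) - f_J(x)\bigr).
\end{equation*}
Set $g_{J,i}(t)\eqdef f_J(x+U_i t) - f_J(x) - \ve{\nabla_i f_J(x)}{t}$; it is nonnegative (convexity of $f_J$), vanishes at $0$, and, since $\nabla_i f = \sum_{J\ni i}\nabla_i f_J$, inequality \eqref{eq:Lipschitz_ineq} yields the \emph{collective} bound $\sum_{J\ni i} g_{J,i}(t)\le\tfrac{\Lip_i}{2}\|t\|_{(i)}^2$ for all $t$. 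Summing the previous display over $J$ and cancelling the first-order terms (which reassemble into $\ve{\nabla f(x)}{h_{[S]}}$) gives the deterministic overapproximation
\begin{equation*}
  f(x+h_{[S]}) - f(x) - \ve{\nabla f(x)}{h_{[S]}}\ \le\ \sum_{J\in\calJ}\frac1{m_J}\sum_{i\in S\cap J} g_{J,i}\bigl(m_J h^{(i)}\bigr).
\end{equation*}

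\emph{Taking expectations.} Apply $\E$ over $\hat\DS$. The linear part of \eqref{eq:ss8587sj} comes out with the correct coefficient $\tfrac{\TT}{s} = \tfrac{\K\TT}{n}$. For the quadratic part, apply the Lemma to each $J$ with $\kappa(m,i)=\tfrac1m g_{J,i}(m h^{(i)})$ to convert $\sum_{i\in\hat\DS\cap J}$ into $\tfrac{|\hat\DS\cap J|}{\K\xi}\sum_{i\in J}$, arriving at the bound $\tfrac1{\K\xi}\sum_{i}\sum_{J\ni i}\E\bigl[g_{J,i}(|\hat\DS\cap J|\,h^{(i)})\bigr]$. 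The key is that after the reduction the variable $|\hat\DS\cap J|$ has, for every $J\ni i$, the same distribution as one reference variable $M$, so $\sum_{J\ni i}\E[g_{J,i}(|\hat\DS\cap J|\,h^{(i)})] = \E\bigl[\sum_{J\ni i} g_{J,i}(M h^{(i)})\bigr]\le\tfrac{\Lip_i}{2}\|h^{(i)}\|_{(i)}^2\,\E[M^2]$ by the collective bound applied pointwise, with \emph{no} inflation by the number of sets containing $i$. It remains to compute $\E[M^2]$ for $M=\sum_{\Kidx=1}^{\K}Y_\Kidx$ with the $Y_\Kidx$ independent $\mathrm{Hypergeometric}(s,\xi,\TT)$: independence gives $\E[M^2] = \K\,\Var(Y_1) + \K^2(\xi\TT/s)^2$, and $\Var(Y_1) = \tfrac{\xi\TT}{s}\cdot\tfrac{s-\xi}{s}\cdot\tfrac{s-\TT}{s-1}$ for $s\ge 2$ (while $M\equiv\K$ when $s=1$). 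Plugging in, $\tfrac{s\,\E[M^2]}{\K\xi\TT} = \tfrac{(s-\xi)(s-\TT)}{s(s-1)} + \K\tfrac{\xi\TT}{s} = 1 + \tfrac{(\xi-1)(\TT-1)}{s-1} + (\K-1)\tfrac{\xi\TT}{s}$, which is exactly the $\beta$ of \eqref{eq:betaForGeneralCase} — the variance term being the intra-computer contribution $1 + \tfrac{(\xi-1)(\TT-1)}{\max\{1,s-1\}}$ (matching the $\TT$-nice estimate of \cite{RT:PCDM} with $(n,\omega)\to(s,\xi)$) and the $\K-1$ cross terms each contributing $\E[Y_1]=\xi\TT/s$. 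This is \eqref{eq:ss8587sj} with $w_i=\Lip_i$; setting $\K=1$ ($\xi=\omega$, $s=n$) recovers \cite{RT:PCDM} and checks the constants.

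\emph{Where the work is.} The one delicate point is keeping the dependence on the degree of separability: a priori the inner sum $\sum_{J\ni i}g_{J,i}(\cdot)$ has as many terms as there are functions $f_J$ depending on block $i$, a number controlled neither by $\xi$ nor by $\omega$, so bounding each $g_{J,i}$ separately by $\tfrac{\Lip_i}{2}\|\cdot\|_{(i)}^2$ is hopelessly lossy. What rescues the argument is that after the initial reduction the variables $|\hat\DS\cap J|$ are exchangeable across $J$, which permits re-synchronising the entire inner sum onto a single random argument $M h^{(i)}$ \emph{before} invoking the collective Lipschitz bound; the distributed structure then enters only through the first two moments of $M$, splitting cleanly into one intra-computer term (a hypergeometric variance) and $\K-1$ inter-computer terms (each a hypergeometric mean). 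The remaining care is the degenerate case $s=1$, where each computer owns a single block, $\TT=1$, no two interesting blocks can be drawn together on one machine, and the variance term disappears — which is why \eqref{eq:betaForGeneralCase} carries $\max\{1,s-1\}$ rather than $s-1$.
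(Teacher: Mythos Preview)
Your argument is correct and follows essentially the same route as the paper: the same padding reduction to $|\vc{\Partc}{\Kidx}\cap J|\equiv\xi$, Jensen on the convex pieces, the Lemma to trade $\sum_{i\in\hat\DS\cap J}$ for $\tfrac{|\hat\DS\cap J|}{\K\xi}\sum_{i\in J}$, the collective Lipschitz bound $\sum_{J\ni i}g_{J,i}(t)=\phi(U_it)\le\tfrac{\Lip_i}{2}\|t\|_{(i)}^2$, and the second-moment computation for a sum of $\K$ independent hypergeometrics. If anything, your introduction of the reference variable $M$ makes explicit a step the paper's write-up leaves implicit (it continues to write $\theta_{J,\hat\DS}$ after summing over $J$, silently using that all $\theta_{J,\hat\DS}$ share one law), so your presentation is in fact cleaner at that point.
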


\begin{proof}
For fixed $x \in \R^\N$,  define $\phi(h) \eqdef f(x+h)-f(x) - \la \nabla f(x),h \ra$.
Likewise, for all $J \in \calJ$ we define $\phi_J(h)\eqdef f_J(x+h) - f_J(x) - \ve{\nabla f_J(x)}{h}$. Note that
\begin{equation}\label{eq:sjs65876}
\phi(h) = \sum_{J \in \calJ} \phi_J(h).
\end{equation}
Also note that the functions $\phi_J$ and $\phi$ are convex and minimized at $h=0$, where they attain the value of $0$. For any uniform sampling, and hence for $\hat{\DS}$ in particular, and any $a\in \R^\N$, one has
$
\E[\ve{a}{h_{[\hat{\DS}]}}] = \tfrac{\E[|\hat{\DS}|]}{n}\ve{a}{h},
$
and therefore
\begin{equation}\label{e:asfsafda}
\E[\phi(h_{[\hat{\DS}]})] = \E[f(x+h_{[\hat{\DS}]})] - f(x) - \tfrac{\E[|\hat{\DS}|]}{n} \ve{\nabla f(x)}{h}.
\end{equation}
Because of this, and in view of \eqref{eq:ss8587sj} and the fact  that as $\E[|\hat{\DS}|] = \K \tau$,\footnote{In fact, $|\hat{\DS}|=\K \tau$ with probability 1.} we only need to show that
\begin{equation}
\label{eq:jsus868s}
\E[\phi(h_{[\hat{\DS}]})]	 \leq \tfrac{\K \tau}{n} \tfrac{\beta}{2}\|h\|_w^2.
\end{equation}
Our starting point in establishing \eqref{eq:jsus868s} will be the observation that
from \eqref{eq:Lipschitz_ineq} used with $t=h^{(i)}$
we get
\begin{equation}\label{eg:asfoipowpf2}
 \phi(U_i h^{(i)}) \leq \tfrac{\Lip_i}{2} \ncs{\vc{h}{i}}{i}, \quad i\in [n].
\end{equation}

To simplify the proof, we shall without loss of generality  assume that $|\vc{\Partc}{\Kidx}\cap J| = \xi$ for all $c\in \{1,2,\dots,\K\}$ and $J \in \calJ$ for some constant $\xi>1$. This can be achieved by extending the sets  $J \in \calJ$ by introducing dummy dependencies (note that the assumptions of the theorem are still satisfied after this change). For brevity, let us write $\theta_{J,\hat \DS} \eqdef |J\cap \hat \DS|$ and $h_{[i]}\eqdef U_i h^{(i)}$. Fixing  $J \in \calJ$ and $h \in \R^\N$, we can estimate:

\begin{eqnarray}
\E[\phi_J(\vsubset{h}{\hat \DS})]
 &\overset{\eqref{eq:lllop09}}{=}& \E\left[ 
   \phi_J\left(\sum_{i \in \hat \DS }  \vsubset{h}{i}\right)\right]
 \;=\; \E\left[ 
   \phi_J\left(\sum_{i \in \hat \DS \cap J}  \vsubset{h}{i}\right)\right]
   \notag
\\
 &=& \E\left[ 
   \phi_J\left(\tfrac1{\theta_{J,\hat \DS}}\sum_{i \in \hat \DS \cap J} \theta_{J,\hat \DS} \vsubset{h}{i}\right)\right]
   \; \leq \;
 \E\left[ 
   \tfrac1{\theta_{J,\hat \DS}}\sum_{i \in \hat \DS \cap J} \phi_J\left( \theta_{J,\hat \DS} \vsubset{h}{i}\right)\right]
   \notag
\\
&\overset{\eqref{eq:jd9876gdh}}{=}&
 \E\left[
 \tfrac1{\theta_{J,\hat \DS}}
   \left(\tfrac{\theta_{J,\hat \DS}}{\K \xi}
   \sum_{i \in   J} \phi_J\left( \theta_{J,\hat \DS} \vsubset{h}{i}\right)\right)\right]
   \;=\;
 \frac{1}{\K \xi}
 \E\left[
   \sum_{i \in   J} \phi_J\left( \theta_{J,\hat \DS} \vsubset{h}{i}\right) \right]\notag\\
& =&
 \frac{1}{\K \xi}
 \E\left[
   \sum_{i \in  \setn} \phi_J\left( \theta_{J,\hat \DS} \vsubset{h}{i}\right) \right].\label{eq:sgs78jjs8s}
\end{eqnarray}

In the second equation above we have used the assumption that $\phi_J$ depends on blocks $i \in J$ only. The only inequality above follows from convexity of $\phi_J$. Note that this step can only be performed if the sum is over a nonempty index set, which happens precisely when $\theta_{J,\hat{\DS}} \geq 1$. This technicality can be handled at the expense of introducing a heavier notation (which we shall not do here), and  \eqref{eq:sgs78jjs8s} still holds. Finally, in one of the last steps we have used \eqref{eq:jd9876gdh} with $\kappa(|\hat{\DS}\cap J|,i) \leftarrow \phi_J(\theta_{J,\hat{\DS}}h_{[i]})$.

By summing up inequalities \eqref{eq:sgs78jjs8s} for $J \in \calJ$, we get
\begin{eqnarray}
\E\left[ \phi(\vsubset{h}{\hat \DS})\right]
&\overset{\eqref{eq:sjs65876}}{=}& 
\sum_{J \in \calJ} \E\left[ \phi_J(\vsubset{h}{\hat \DS})\right]
\; \overset{\eqref{eq:sgs78jjs8s}}{\leq} 
\;
 \frac{1}{\K \xi}\sum_{J \in \calJ}
 \E\left[
   \sum_{i \in \setn} \phi_J \left( \theta_{J,\hat \DS} \vsubset{h}{i}\right)\right]
\notag
\\ 
&\overset{\eqref{eq:sjs65876}}{=}&  
 \frac{1}{\K \xi}
 \E\left[
   \sum_{i \in \setn} \phi \left( \theta_{J,\hat \DS} \vsubset{h}{i}\right)\right]\;\overset{\eqref{eg:asfoipowpf2}}{\leq}\;
 \frac{1}{\K \xi}
 \E\left[
   \sum_{i \in  \setn}
   \tfrac{\Lip_i}{2}
   \ncs{\theta_{J,\hat \DS} h^{(i)}}{i}  \right]\notag
\\
&=& 
\frac{1}{2\K \xi}
 \E\left[
   \theta_{J,\hat \DS}^2 \sum_{i \in  \setn}
   \Lip_i
   \ncs{h^{(i)}}{i} \right]
\;\; \overset{\eqref{eq:norms}}{=}\;\;
\frac{1}{2 \K \xi} \|h\|_w^2
\E\left[\theta_{J,\hat \DS}^2\right].\label{eq:sjsjs987}
\end{eqnarray}

We now need to  compute $\E[\theta_{J,\hat \DS}^2]$.
Note that the random variable $\theta_{J,\hat \DS}$
is the sum of $\K$ independent random variables
$\theta_{J,\hat \DS} = \sum_{\Kidx=1}^\K \theta_{J,\vc{\hat \DS}{\Kidx}}$,
where  $\theta_{J,\vc{\hat \DS}{\Kidx}}$ has the simple law
$$\Prob(\theta_{J,\vc{\hat \DS}{\Kidx}}=k)=
\begin{pmatrix}\xi \\ k \end{pmatrix}\begin{pmatrix}s-\xi \\ \TT-k \end{pmatrix}/
\begin{pmatrix}s\\ \TT \end{pmatrix}.
$$
We therefore get
\begin{eqnarray}
\E[\theta_{J,\hat \DS}^2]
&=& \E[(\sum_{\Kidx=1}^\K \theta_{J,\vc{\hat \DS}{\Kidx}})^2]
\; = \; \K \E[(\theta_{J,\vc{\hat \DS}{\Kidx}})^2]
 +   \K(\K-1)
  (\E[\theta_{J,\vc{\hat \DS}{\Kidx}}])^2 \notag
\\
&=& \K \tfrac{\xi \TT}{s} \left(1 +\tfrac{(\xi-1)(\TT-1)}{\max\{1,s-1\}}  \right)
 +   \K(\K-1)
  \left( \tfrac{\xi}{s} \TT  \right)^2.\label{eq:js45sgs78ts}
\end{eqnarray}

It only remains to combine  \eqref{eq:sjsjs987}   and \eqref{eq:js45sgs78ts} to get \eqref{eq:jsus868s}. \qquad \qquad \qquad \qquad \qquad \qed
\end{proof}

Note that ESO inequalities have recently been used in the analysis of distributed coordinate descent methods by Richt\'{a}rik and Tak\'{a}\v{c} \cite{richtarik2013distributed} and Fercoq et al. \cite{fercoq2014fast} However, their assumptions on $f$ and derivation of ESO are very different and hence our results apply to a different class of functions.

\section{Iteration complexity}
\label{sec:convergence}

In this section, we state two   iteration complexity results for Algorithm \ref{alg:distributedEx}. Theorem \ref{thm:complexity-convex-case} deals with a non-strongly convex objective and shows that
the algorithm achieves sub-linear rate of convergence $\mathcal{O}(\frac1\epsilon)$. Theorem \ref{thm:complexity-strongly-convex-case} shows Algorithm \ref{alg:distributedEx} achieves linear convergence rate $\mathcal{O}(\log\frac1\epsilon)$ for a strongly convex objective. 

However, we wish to stress that in high dimensional settings, and especially in applications where low- or medium-accuracy solutions are acceptable, the dependence of the method on $\epsilon$ is somewhat less important than its dependence on data size through quantities such as the dimension $N$ and the number of blocks $n$, and on quantities such as the number of computers $\K$ and number of parallel updates per computer $\tau$, which is related to the number of cores.

Notice that once the ESO is established by Theorem~\ref{thm:esoGeneral}, the complexity results, Theorems \ref{thm:complexity-convex-case} and \ref{thm:complexity-strongly-convex-case}, follow from the generic complexity results in \cite{rachael:Improved} and  \cite{RT:PCDM}, respectively.





\subsection{Convex functions}

\begin{theorem}[Based on \cite{rachael:Improved}] \label{thm:complexity-convex-case}
Let $f$ satisfy Assumption~\ref{ass:f} and sampling $\hat{\DS}$ satisfy Assumption~\ref{asm:partitions}. Let $x_k$ be the iterates of Algorithm~\ref{alg:distributedEx} applied to problem \eqref{eq:P}, where parameters $\beta$ and $w$ are chosen as in Theorem~\ref{thm:esoGeneral} and the random sets $Z_k$ are iid, following the law of $\hat{\DS}$. Then for all $k\geq 1$,
\begin{align}
\E\left[F(x_k)-F^*\right] 
  \leq \frac{n}{n+ \K \tau k } 
   \left(
   \frac{\beta}{2} \|x_0-x^*\|_w^2
     + F(x_0)-F^*   
   \right).
\end{align}
\end{theorem}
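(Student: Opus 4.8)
The plan is to reduce Theorem~\ref{thm:complexity-convex-case} to the generic iteration-complexity result for parallel block coordinate descent with an ESO, as established in \cite{rachael:Improved}. The point is that Algorithm~\ref{alg:distributedEx} is, from the analytical standpoint, an instance of the general parallel randomized coordinate descent method of \cite{RT:PCDM,rachael:Improved}: at each iteration a uniform sampling $\hat\DS$ of blocks is drawn, and for each selected block $i$ the update $h^{(i)}(x_k)$ solves exactly the proximal subproblem \eqref{eq:h_definitionSeparablePart} with the stepsize parameters $\beta, w$. The $(C,\tau)$-distributed sampling of Assumption~\ref{asm:partitions} is a uniform sampling — indeed $\Prob(i\in\hat\DS)=\tau/s$ for every block $i$ — and it has $|\hat\DS|=C\tau$ with probability one, so $\E[|\hat\DS|]=C\tau$. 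Hence all hypotheses required by the generic theorem are in force.

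First I would invoke Theorem~\ref{thm:esoGeneral}: with the stated $\beta$ and $w_i=L_i$, the pair $(f,\hat\DS)$ satisfies the ESO inequality \eqref{eq:ss8587sj}. Second, I would recall the structure of $F=f+\cPsi$: $\cPsi$ is block separable and convex closed (Assumption~\ref{ass:reularizer}), and $f$ is convex, so $F$ is convex; this is exactly the setting covered by \cite{rachael:Improved}. Third, I would quote the generic convergence estimate from \cite{rachael:Improved}, which for a method using a uniform sampling $\hat\DS$ with $\E[|\hat\DS|]=:\mu$ and an ESO $(f,\hat\DS)\sim ESO(\beta,w)$ gives, for every $k\ge 1$,
\begin{equation*}
\E[F(x_k)-F^*]\le \frac{n}{n+\mu k}\left(\frac{\beta}{2}\|x_0-x^*\|_w^2 + F(x_0)-F^*\right),
\end{equation*}
provided the level set of $F$ at $x_0$ has finite $w$-diameter in the relevant sense (or, in the simplest formulation, that $F^*$ is attained at some $x^*$). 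Substituting $\mu=C\tau$ and the ESO parameters from Theorem~\ref{thm:esoGeneral} yields precisely the claimed bound.

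The only genuine work, therefore, is the verification that Algorithm~\ref{alg:distributedEx} matches the hypotheses of the cited generic theorem — in particular that the distributed sampling is uniform and that the per-block updates \eqref{eq:h_definitionSeparablePart} coincide with the proximal steps analyzed there — together with a careful statement of which version of the result from \cite{rachael:Improved} is being used (the constant $F(x_0)-F^*$ appearing additively, rather than a squared-distance-only bound, comes from the particular potential-function argument in that paper). I do not expect a serious obstacle; the substantive mathematical content has already been discharged in proving the ESO (Theorem~\ref{thm:esoGeneral}), and this theorem is essentially a corollary. If one wanted a self-contained proof instead of a citation, the main step would be the standard one: apply the ESO to bound $\E[F(x_{k+1})\mid x_k]$ by a separable overapproximation, minimize it blockwise to recognize the algorithm's update, use convexity of $F$ to pass from $x_k$ toward $x^*$, and telescope the resulting recursion — but given the availability of \cite{rachael:Improved}, the reduction argument above is the intended route.
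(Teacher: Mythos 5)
Your proposal matches the paper's own treatment: the paper explicitly derives this theorem by establishing the ESO (Theorem~\ref{thm:esoGeneral}) and then invoking the generic complexity result of \cite{rachael:Improved} for uniform samplings with $\E[|\hat{\DS}|]=\K\tau$, exactly as you do. The verification that the $(C,\tau)$-distributed sampling is uniform and that the block updates \eqref{eq:h_definitionSeparablePart} are the proximal steps analyzed there is precisely the reduction the paper intends, so no further work is needed.
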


Note that the leading term in the bound decreases as the number of blocks updated in a single (parallel) iteration, $\K \tau$, increases. However, notice that the parameter $\beta$ also depends on $\K$ and $\tau$. We shall investigate this phenomenon in Section~\ref{sec:parall-speedup} and show that the level of speed-up one gets by increasing $\K$ and/or $\tau$ (where by speed-up we mean  the decrease of the upper bound established by the theorem) depends on the degree of separability $\omega$ of $f$. The smaller $\omega$ is, the more speed-up one obtains.

\subsection{Strongly-convex functions}

If we assume that $F$ is strongly convex with respect to the norm $\|\cdot\|_w$ then the following theorem shows that $F(x_k)$ converges to $F^*$ linearly, with high probability. 

\begin{definition}[Strong convexity] Function $\phi:\R^N\to \R\cup \{+\infty\}$
is strongly convex with respect to the norm $\|\cdot\|_w$ with convexity parameter $\mu_{\phi}(w) \geq 0$
if 
\begin{equation}\label{eq:strong_def}\phi(y)\geq \phi(x) + \ve{\phi'(x)}{y-x} + \tfrac{\mu_{\phi}(w)}{2}\|y-x\|_w^2, \quad \forall  x,y \in \dom \phi, \end{equation}
where $\phi'(x)$ is any subgradient of $\phi$ at $x$. 
\end{definition}

Notice that by setting $\mu_\phi(w)=0$, one obtains the usual notion of convexity. 
Strong convexity of $F$ may come from $f$ or $\cPsi$ (or both); we write $\mu_f(w)$ (resp.\ $\mu_\cPsi(w)$) for the (strong) convexity parameter of $f$ (resp.\ $\cPsi$). It follows from \eqref{eq:strong_def} that 
if $f$ and $\cPsi$ are strongly convex, then $F$ is strongly convex with, e.g.,
$\mu_{F}(w) \geq \mu_{f}(w)+ \mu_{\cPsi}(w).$

\begin{theorem}[Based on \cite{RT:PCDM}] \label{thm:complexity-strongly-convex-case}
Let us adopt the same assumptions as in Theorem~\ref{thm:complexity-convex-case}. Moreover, assume that $F$ is strongly convex with $\mu_f(w)+\mu_\cPsi(w)>0$. Choose initial point $x_0\in \R^\N$, target confidence level $0<\rho<1$, target accuracy level $0<\epsilon<F(x_0)-F^*$ and
\begin{equation}\label{eq:k_uniform_strong}
 K\geq
  \frac{n}{\K \tau} \frac{\beta+\mu_\cPsi(w)}{\mu_f(w)+\mu_\cPsi(w)} \log \left(\frac{F(x_0)-F^*}{\epsilon\rho}\right).
\end{equation}
If $\{x_k\}$ are the random points generated by Algorithm \ref{alg:distributedEx}, then
$\Prob(F(x_K)-F^*\leq \epsilon) \geq 1-\rho$.
\end{theorem}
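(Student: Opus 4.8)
The plan is to feed the ESO of Theorem~\ref{thm:esoGeneral} into the generic strongly-convex complexity machinery of \cite{RT:PCDM} and then pass from an expectation bound to a high-probability bound by Markov's inequality. First I would collect the ingredients. By Theorem~\ref{thm:esoGeneral}, $(f,\hat\DS)\sim ESO(\beta,w)$ with $\beta$ given by \eqref{eq:betaForGeneralCase} and $w=(\Lip_1,\dots,\Lip_n)$; the sampling $\hat\DS$ is uniform with $|\hat\DS|=\K\TT$ almost surely, so $\E[|\hat\DS|]/n=\K\TT/n$; the update \eqref{eq:h_definitionSeparablePart} is precisely the proximal step attached to this ESO; and $\cPsi$ is block separable with convex, closed blocks (Assumption~\ref{ass:reularizer}). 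These are exactly the hypotheses of the generic analysis, so in the write-up it suffices to cite it; what follows is how that analysis goes.

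The core step is a one-iteration expected contraction. Conditioning on $x_k$ and applying \eqref{eq:ss8587sj} with $h=h(x_k)$ bounds $\E[F(x_{k+1})\mid x_k]$ above by a separable model that is minimized blockwise by the proximal update \eqref{eq:h_definitionSeparablePart}. Evaluating that same upper bound instead at the point $x_k+\lambda(x^*-x_k)$ for $\lambda\in[0,1]$, using block separability and convexity of $\cPsi$, then strong convexity of $f$ (parameter $\mu_f(w)$) and of $\cPsi$ (parameter $\mu_\cPsi(w)$) via \eqref{eq:strong_def}, and finally optimizing over $\lambda$, yields
\begin{equation*}
\E\!\left[F(x_{k+1})-F^*\mid x_k\right]\;\le\;(1-\theta)\,\big(F(x_k)-F^*\big),\qquad
\theta\eqdef\frac{\K\TT}{n}\cdot\frac{\mu_f(w)+\mu_\cPsi(w)}{\beta+\mu_\cPsi(w)},
\end{equation*}
where $\theta\in(0,1]$ since $\mu_f(w)+\mu_\cPsi(w)>0$, $\K\TT\le n$, and $\mu_f(w)\le\beta$ (the standard fact that $\mu_f(w)\le\beta$ whenever $(f,\hat\DS)\sim ESO(\beta,w)$). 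Taking full expectations and iterating from $x_0$ gives $\E[F(x_K)-F^*]\le(1-\theta)^K(F(x_0)-F^*)$.

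Finally I would convert this to high probability. Since $F(x_K)-F^*\ge0$, Markov's inequality gives $\Prob(F(x_K)-F^*>\epsilon)\le(1-\theta)^K(F(x_0)-F^*)/\epsilon$, so it is enough to make $(1-\theta)^K\le\epsilon\rho/(F(x_0)-F^*)$. Using $\log(1-\theta)\le-\theta$, this holds whenever $\theta K\ge\log\big((F(x_0)-F^*)/(\epsilon\rho)\big)$, i.e.\ whenever $K\ge\frac{n}{\K\TT}\frac{\beta+\mu_\cPsi(w)}{\mu_f(w)+\mu_\cPsi(w)}\log\big((F(x_0)-F^*)/(\epsilon\rho)\big)$, which is exactly \eqref{eq:k_uniform_strong}; for such $K$ we obtain $\Prob(F(x_K)-F^*\le\epsilon)\ge1-\rho$.

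The main obstacle is the contraction inequality: correctly carrying the non-smooth block-separable term $\cPsi$ through the expected separable overapproximation, choosing the comparison point $x_k+\lambda(x^*-x_k)$ and the multiplier $\lambda$ optimally, and checking $\mu_f(w)\le\beta$ so that $\theta\le1$. The telescoping and the Markov step are routine; and since the contraction is already packaged as a general lemma in \cite{RT:PCDM}, the actual proof reduces to verifying that Theorem~\ref{thm:esoGeneral} supplies that lemma's hypotheses.
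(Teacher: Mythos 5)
Your proposal is correct and follows essentially the same route as the paper, which simply observes that once Theorem~\ref{thm:esoGeneral} establishes the ESO for the $(\K,\tau)$-distributed sampling, the result follows from the generic strongly convex complexity theorem of \cite{RT:PCDM} (one-iteration expected contraction with factor $1-\tfrac{\K\tau}{n}\tfrac{\mu_f(w)+\mu_\cPsi(w)}{\beta+\mu_\cPsi(w)}$, followed by Markov's inequality). The extra detail you supply about how that generic contraction is derived is consistent with the cited analysis.
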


Notice that now both $\epsilon$ and $\rho$ appear inside a logarithm. Hence, it is easy to obtain accurate solutions with high probability. 

\subsection{Parallelization speed-up is governed by sparsity} \label{sec:parall-speedup}

If we assume that $\|x_0 - x^*\|_w^2 \gg F(x_0)-F^*$, 
then in view of Theorem~\ref{thm:complexity-convex-case}, the number of iterations required by our method to get an $\epsilon$ solution in expectation is $O( \frac{\beta}{\K \tau \epsilon})$. Hence, the smaller  $\frac{\beta}{\K \tau \epsilon}$ is, the fewer are the iterations required.  If $\beta$ were a constant independent of $\K$ and $\TT$, one would achieve linear speed-up by increasing workload (i.e., by increasing $\K \tau$).  However, this is the case for $\K=1$ and $\omega=1$ only (see Theorem~\ref{thm:esoGeneral}). Let us look at the general case.
If we write $\eta\eqdef \frac{\xi}{s}$ (this a measure of sparsity of the partitioned data),  then
\begin{align*}
 \frac{\beta}{\K \tau}
  &\overset{\eqref{eq:betaForGeneralCase}}{=}
  \frac{1+\frac{(\xi-1)(\TT-1)}{\max\{1,s-1\}} + (\K-1)\frac{\xi \TT}{s}}{\K\TT}
  \leq
  \frac{1+\frac{\xi(\TT-1)}{s} + (\K-1)\frac{\xi \TT}{s}}{\K\TT}
  \\&\  =
  \frac{1+\eta (\TT-1) + (\K-1) \eta  \TT}{\K\TT}
  =
  \frac{1+\eta    (\K \TT-1) }{\K\TT}
  =\frac{1}{\K\TT}+
  \eta \left(1-\frac{1}{\K\TT}\right).
\end{align*}
As expected, the first term represents linear speed-up. The second term represents a penalty for the lack of sparsity (correlations) in the data. As $\K\TT$ increases, the second term becomes increasingly dominant, and hence slows the speed-up from almost linear to none.
Notice that for fixed $\eta$, the ratio $\frac{\beta}{\K\tau}$ as a function of $\K\TT$ is decreasing and hence we always get {\em some} speed-up by increasing $\K \tau$.

Figure \ref{fig:speed-up} (left) shows the speed-up factor ($\frac{\K \tau }{\beta}$; high values are good)
as a function of $\K\TT$ for different sparsity levels $\eta$. One can observe that sparse problems achieve almost linear speed-up even for bigger value of $\K\TT$, whereas for, e.g., $\eta=0.2$, almost linear speed-up is possible only up to $\K\TT=10$. For  sparser data with $\eta=0.01$, linear speed-up can be achieved up to $\K\TT=100$. For $\eta=0.001$, we can use $\K\TT=10^3$.
The right part of Figure~\ref{fig:speed-up} shows how sparsity affects speed-up for a fixed number of updates $\K \TT$. Again, the break-point of almost linear speed-up is visibly present.
\begin{figure}[tp]
 \centering
 \includegraphics[width=2in]{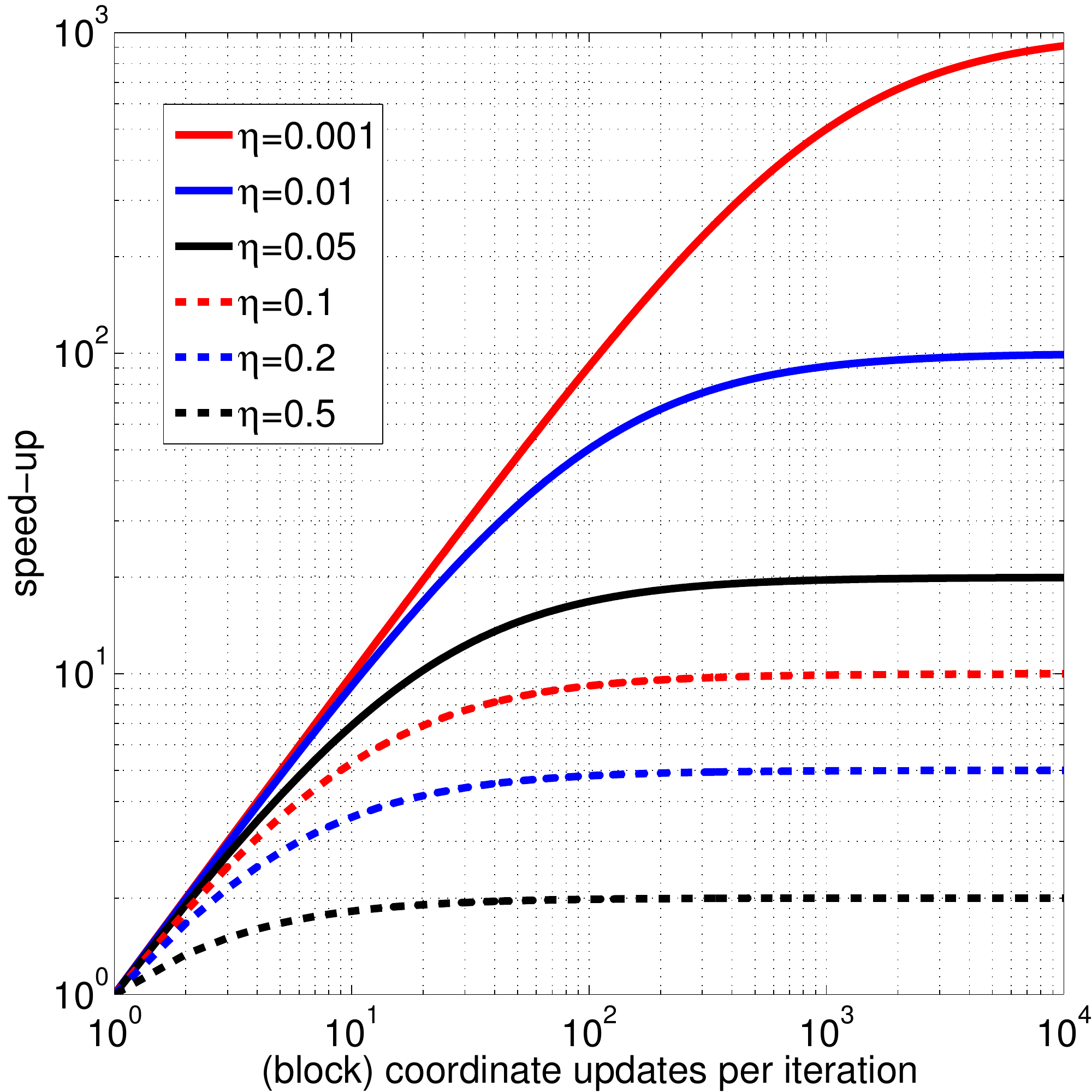}
 \includegraphics[width=2in]{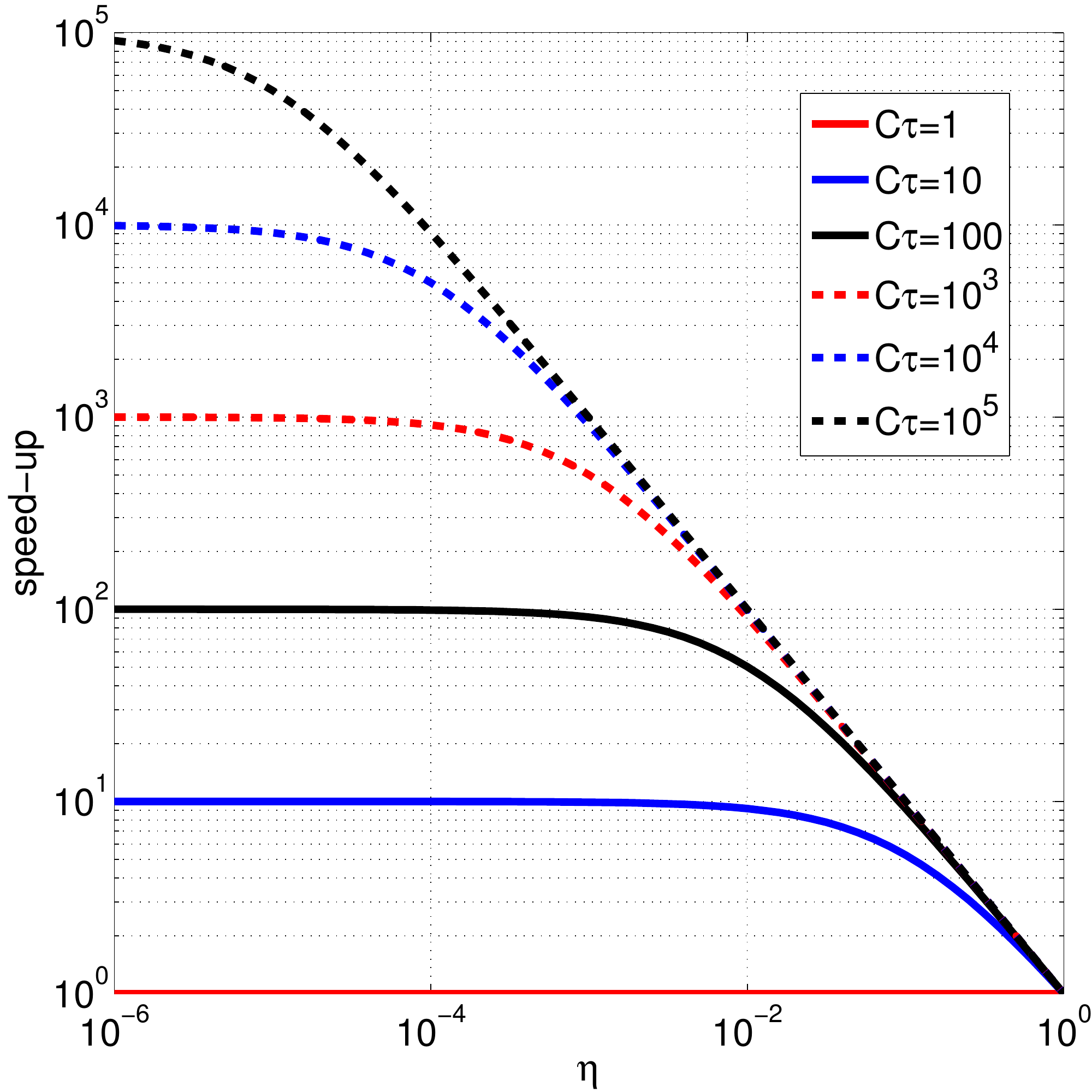}
 \caption{Speed-up gained from updating more blocks per iteration is almost linear initially, and
 depending on sparsity level $\eta$, may become significantly sublinear.}
 \label{fig:speed-up}
\end{figure}

Similar observations in the non-distributed setting were reported in \cite{RT:PCDM}. The phenomenon is not merely a by-product of our theoretical analysis; it also appears in practice. 

\subsection{The cost of distribution}

Notice that in a certain intuitive sense, variants of Algorithm~\ref{alg:distributedEx} are comparable,
as long as each iteration updates the same number $\K\tau$ of blocks.
 This allows us to vary $\K$ and $\tau$, while keeping the product constant. In particular, let us consider two scenarios: 
\begin{enumerate}
\item Consider $\K$ computers, each updating $\tau$ blocks in parallel, and 
\item Consider $1$ computer updating $\K \tau$ blocks in each iteration in parallel.
\end{enumerate} 

For the sake of comparison, we assume that the underlying problem is small enough so that it can be stored on and solved by a single computer. 
Further, we assume that $F$ is strongly convex, $\mu(\cPsi)=0$ and $s = \tfrac{n}{\K} \geq 2$. Similar comparisons can be made in other settings as well, but given the page restrictions, we restrict ourselves to this case only. 

In the iteration-complexity bound \eqref{eq:k_uniform_strong}, we notice that the only difference is in the value of $\beta$. Let $\beta_1$ be the $\beta$ parameter in the first situation with $\K$ computers, and $\beta_2$ be the $\beta$ parameter in the second situation with 1 computer. The ratio of the complexity bounds \eqref{eq:k_uniform_strong} is hence equal to the ratio 
$$
\tfrac{\beta_1}{\beta_2}
 = 
 \frac{
 (1 +\tfrac{(\xi-1)(\TT-1)}{s-1}
 +    (\K-1)
    \tfrac{\xi \TT}{s} ) } {1 +\tfrac{(\omega-1)(\K\TT-1)}{\K s -1}}.
$$
Notice that $\tfrac{\omega}{\K} \leq \xi \leq \omega$. The ratio $\beta_1/\beta_2$ is increasing in $\xi$. We thus obtain the following bounds:
$$
\mbox{LB}:=
\frac
   {1 +\frac{(\omega-\K)(\TT-1)}{n-\K}
 +    (\K-1)
    \frac{\omega \TT}{n}     }
  {1 +\frac{(\omega-1)(\K\TT-1)}{n-1}}
\leq
\frac{\beta_1}{\beta_2}
 \leq \frac
   {1 +\frac{(\omega-1)(\K\TT-\K)}{n-\K}
 +    (\K-1)
    \frac{\omega \K \TT}{n}     }
  {1 +\frac{(\omega-1)(\K\TT-1)}{n-1}} =: \mbox{UB}.
$$

Table \ref{tbl:boundOnBetaForNiceVSDistr}
presents the values of $\mbox{LB}$ and $\mbox{UB}$ for various parameter choices and problem sizes. We observe that the value of $\beta_2$ is around 1.
The value of $\beta_1$ depends on a particular partition, but we are sure that $\beta_1 \in [\beta_2 \cdot \mbox{LU}, \beta_2 \cdot \mbox{UB}]$.
In Table \ref{tbl:boundOnBetaForNiceVSDistr}, $\mbox{UB}$ is less than 2, which means that by distributing the computation, the method will at most double the number of iterations. However, larger values of UB,
 albeit $\mbox{UB} \precsim \K$,
are possible for different settings of the parameters.
For a different class of functions $f$, an upper bound of 2 was proven in \cite{richtarik2013distributed} and improved in \cite{fercoq2014fast} to the factor $1+1/(\tau-1)$ whenever $\tau>1$.

Of course, if the  problem size exceeds the memory available at a single computer, the option of not distributing the data and computation may not be  available. 
It is reassuring, though, to know that the price we pay for distributing the data and computation, in terms of the number of iterations, is bounded. 
Having said that, a major complication associated with any distributed method is the communication, which we discuss in the two following sections.

\begin{table}[htb!]

 \caption{Lower and upper bounds on $\beta_1/\beta_2$ for a selection parameters $n,\omega,\K$ and $\tau$.}
 \label{tbl:boundOnBetaForNiceVSDistr}

 \centering
 \begin{tabular}{r|r|r|r|r|r|r}
 \multicolumn{1}{c|}{$n$} &
 \multicolumn{1}{|c|}{$\omega$} & \multicolumn{1}{|c|}{$\K$}  & \multicolumn{1}{|c||}{$\TT$} & \multicolumn{1}{|c|}{$\beta_2$} & \multicolumn{1}{|c|}{LB} & \multicolumn{1}{|c}{UB}
 \\ \hline \hline
 $10^6$&$10^2$ & 10 & $50$& 1.049 & 1.0000086 &   1.4279673
 \\
 $10^7$&$10^2$ & 10 & $50$& 1.005 & 1.0000009 &   1.0446901
 \\
 $10^8$&$10^2$ & 100 & $100$& 1.009 & 1.0000010 & 1.9801990
 \end{tabular}
\end{table}

\section{Two implementations}\label{sec:implementation}

Although our algorithm and results apply to a rather broad class of functions, we focus on two important problems
in statistics and machine learning in describing our computational experience,
so as to highlight the finer details of the implementations.



\subsection{An implementation for sparse least squares}
\label{app:SLS}

In many statistical analyses, e.g., linear regression, one hopes to find a solution $x$ with only a few non-zero elements, which improves interpretability.
 It has been recognized, however, that the inclusion of the number of non-zero elements, $\|x\|_0$, in the objective function
raises the complexity of many efficiently solvable problems to NP-Hard \cite{MR1320206,MR2837883}. Recently, a number of randomized coordinate descent methods try to handle the $\ell_0$-norm directly \cite{patrascu2014random}, but only local convergence can be guaranteed.
Fortunately, the inclusion of the sum of absolute values, $\|x\|_1$, provides a provably good proxy, which is also known as $\ell_1$ regularization.
There is a large and growing body of work on both practical solvers for non-smooth convex problems, obtained by such a regularization, and their convergence properties,
when one restricts oneself to a single computer storing the complete input.
Such solvers are, however, most useful in high-dimensional applications,
where the size of the data sets often exceeds the capacity of random-access memory of any single computer available today.

Hence, the first implementation we present is a distributed coordinate-descent algorithm for $\ell_1$-regularized (``sparse'') least squares. The key components needed by  Algorithm \ref{alg:distributedEx} are the
computation of $\Lip_i$, $\nabla_i f(x_k)$, and solving of a block-wise minimization problem.
Note that
$ \nabla_i f(x) =  \sum_{j=1}^m  - \mel{A}{j}{i} (\vc{y}{j} - \mrow{A}{j} x)$,
where $\mrow{A}{j}$ denotes $j$-th row of matrix $A$,
 and $L_i = \|\mcol{A}{i}\|_2^2.$
 The only difficulty is that given the data partition $\{\vc{\Partc}{\Kidx}\}_{\Kidx=1}^\K$, no single computer $\Kidx$ is able to compute $\nabla_i f(x)$ for any $i\in \vc{\Partc}{\Kidx}$.
The reasoning follows from a simple observation: if we wanted to compute $\nabla_i f(\vt{x}{k})$ for a given $\vt{x}{k}$ from  scratch, we would have to
access all coordinates of $\vt{x}{k}$, vector $y$, and all non-zero elements of the input matrix $A$.
This could be avoided by introducing an auxiliary vector  $\vt{g}{k}:=g(\vt{x}{k})$ defined as
\begin{align}
\label{eqn:residuals}
\vt{g}{k} \eqdef
 \; A \vt{x}{k} - y.
\end{align}
Once the value of $g_k=g(x_k)$ is available, a new iterate is 
\begin{equation}
\vt{x}{k+1} = \vt{x}{k} + \sum_{\Kidx=1}^\K \sum_{i \in \Zj{\Kidx}{k}} U_i \vc{h}{i}(\vt{x}{k}).
\end{equation}
and $g_{k+1}=g(x_{k+1})$ can be easily expressed as
\begin{align}
\vt{g}{k + 1}
&=
\vt{g}{k} +\sum_{\Kidx=1}^\K \underbrace{\sum_{i \in \Zj{\Kidx}{k}} \mcol{A}{i} \bvc{h}{i}(\vt{x}{k})}_{\vc{\delta g}{\Kidx}}.
\end{align}
Note that the value $\vc{\delta g}{\Kidx}$ can be computed on computer $\Kidx$ as all required data are available on computer $\Kidx$.
Subsequently, $g_{k+1}$ can be obtained by summation and the formula for $\nabla_i f(x)$ will take the form
$
\nabla_i f(x) =  
   \mcol{A}{i}^T g = \sum_{j=1}^m   \mel{A}{j}{i} \vc{g}{j}.$
Once we know how to compute $\nabla_if(x)$ and $\Lip_i$, all that remains to be done is to solve the 
problem
\begin{equation}
 \min_{t \in \R} a + b t + \frac{c}{2} t^2 + \lambda |d+t|,
\end{equation}
where $a,b,d\in \R$ and $c,\lambda\in\R_{>0}$, which is given by a {\it soft-thresholding} formula
$
 t^*
 =  \mysgn(\zeta) (|\zeta|-\tfrac \lambda{c})_+ - d,
$
where
$\zeta=d-\tfrac{b}{c}.$

\subsection{An implementation for training support vector machines} 
\label{app:SVM}

Let us present another example implementation.
The key problem in supervised machine learning is the training of classifiers.
Given a matrix $A\in\R^{m \times N}$, a compatible vector $y \in \R^m$, and constant $\gamma > 0$,
the goal is to find a vector $x\in\R^N$ which solves the following optimization problem:
\begin{equation}\label{eq:SVMproblemFormulation}
  \min_{x\in\R^N} F(x) \eqdef \underbrace{\gamma \|x\|_1}_{\cPsi(x)} + \underbrace{\sum_{j=1}^m \lf(x, \mrow{A}{j}, \vc{y}{j})}_{f(x)},
\end{equation}
where $\mrow{A}{j}$ again denotes $j$-th row of matrix $A$ and $\lf$ is a loss function,
such as
\begin{align}
\lf_{SL}(x, \mrow{A}{j}, \vc{y}{j}) \eqdef & \frac12 (\vc{y}{j} - \mrow{A}{j} x )^2, & \mbox{ square loss}, \label{SL} \tag{SL} \\
\lf_{LL}(x, \mrow{A}{j}, \vc{y}{j}) \eqdef & \log (1+e^{- \vc{y}{j} \mrow{A}{j} x } ), & \mbox{ logistic loss}, \label{LG} \tag{LL} \\
\lf_{HL}(x, \mrow{A}{j}, \vc{y}{j}) \eqdef & \frac12 \max\{0, 1 - \vc{y}{j} \mrow{A}{j} x \}^2, & \mbox{ hinge square loss}. \label{L2-SVM} \tag{HL}
\end{align}
The input $(A, y)$ is often referred to as the training data.
Rows of matrix $A$ represent observations of $N$ features each
and $y$ are the corresponding classifications to train the classifier on.


Square hinge loss is a popular choice of $\lf$, but is not smooth. It is well known that the dual has the form \cite{DCD,shalev2013stochastic,TR:MINIBATCH}:
\begin{equation}\label{eq:SVM-DUAL}\tag{SVM-DUAL}
  \min_{x \in\R^m} F(x) \eqdef
    \underbrace{\frac{1}{2\lambda m^2} x^T Q x -
    \frac1m x^T {\bf 1}}_{f(x)} +
      \underbrace{\sum_{i=1}^m \indic_{[0,1]}(\vc{x}{i})}_{\cPsi(x)},
\end{equation}
where
$\indic_{[0,1]}$ is the characteristic (or ``indicator'') function of the interval $[0,1]$
and $Q\in \R^{m \times m}$ is the Gram matrix of the data, i.e.,
$Q_{i,j} = \vc{y}{i} \vc{y}{j}   \mrow{A}{i}  \mrow{A}{j}^T $.
If $x^*$ is an optimal solution of \eqref{eq:SVM-DUAL} then
$w^* = w^*(x^*)=\frac1{\lambda m} \sum_{i=1}^m \vc{y}{i} \vc{(x^*)}{i} \mrow{A}{i}^T$
is an optimal solution of the primal problem
\begin{equation}\label{eq:HingeLoss}
  \min_{w \in\R^N} P(w) \eqdef \frac1N \sum_{i=1}^N
  \lf(w, \mrow{A}{i}, \vc{y}{i}) + \frac{\lambda}2 \|w\|^2,
\end{equation}
where $\lf(w, \mrow{A}{i}, \vc{y}{i})=\max\{0,  1 - \vc{y}{i} \mrow{A}{i} w \}$.

Our second example implementation is a distributed coordinate-descent algorithm for support vector machines (SVM) in the \eqref{eq:SVM-DUAL} formulation.
In this case, we define
\begin{equation}
 g_k := \frac1{\lambda m} \sum_{i=1}^m \vc{x_k}{i} \vc{y}{i} \mrow{A}{i}^T.
\end{equation}
Then
\begin{equation}
 \nabla_i f(x)=  \frac{\vc{y}{i} \mrow{A}{i} g_k - 1}{ m}
 , \qquad \Lip_i = \frac{\|\mrow{A}{i}\|^2}{\lambda m^2}.
\end{equation}
The optimal step length is then solution of a one-dimensional problem:
\begin{align}
 \vc{h}{i}(x_k) & = \arg\min_{t\in\R} \nabla_i f(\alpha) t + \frac {\beta} 2 \Lip_i t^2 +
   \indic_{[0,1]} ( \vc{\alpha}{i}+t ) \\
   & = \clip_{[-\vc{\alpha}{i}, 1-\vc{\alpha}{i}]}\left( \frac{\lambda m (1-\vc{y}{i} \mrow{A}{i} g_k )}{\beta \|\mrow{A}{i}\|^2}  \right),
\end{align}
where for $a < b$
$$\clip_{[a,b]}(\zeta)
 = \begin{cases}
    a,& \mbox{if} \ \zeta <a,\\
    b,& \mbox{if} \ \zeta > b,\\
    \zeta,& \mbox{otherwise}.
   \end{cases}
$$

The new value of the auxiliary vector
$g_{k+1}=g(x_{k+1})$ is given by
\begin{align}
\vt{g}{k + 1}
 =
\vt{g}{k} +\sum_{\Kidx=1}^\K
\underbrace{\sum_{i \in \Zj{\Kidx}{k}} \frac1{\lambda m} \bvc{h}{i}(\vt{x}{k}) \vc{y}{i} \mrow{A}{i}^T}_{\vc{\delta g}{\Kidx}}
\end{align}
and the duality gap $G(x_k) = P(g_k) + F(x_k)$ can be easily obtained \cite{TR:MINIBATCH,shalev2013stochastic,jaggi2014communication} as
\begin{align}
G(x_k)&= \frac1m \sum_{i=1}^m
  (\lf(g_k, \mrow{A}{i}, \vc{y}{i})  -\vc{x_k}{i}  )+ \lambda \|g_k\|^2.
\end{align}

\section{Per-iteration complexity}
\label{para:distributed-computation}

Using to the auxiliary vector $\vt{g}{k}$, which was introduced in the previous section, Algorithm \ref{alg:distributedEx} has two alternating and time consuming sub-procedures, namely:
\begin{enumerate}
 \item computation of an update
$\sum_{i \in \Zj{\Kidx}{k}} U_i \vc{h}{i}(\vt{x}{k})$ and 
the accumulation of $\vt{g}{k}$: $\vc{\delta g}{\Kidx}$,  
\item updating $\vt{g}{k}$ to $\vt{g}{k+1}$.
\end{enumerate}
Let us denote the run-time of the first sub-procedure by
$\RT_1(\TT)$, considering this depends on $\TT$,
and the run-time of a second one by $\RT_2$.
We will neglect the rest of the run-time cost, such as managing a loop, evaluation of termination criteria, measuring a computation time, etc.
The total run-time cost $\RT_T$ is hence given by
\begin{equation}\label{eq:totalRT}
 \RT_T = \BO{ \frac{\beta}{\K\TT} (\RT_1(\TT) + \RT_2)   }
\end{equation}
where we consider the case when $\mu_\cPsi(w)\equiv 0$ in
\eqref{eq:k_uniform_strong}.
 Let us now for simplicity assume that the first sub-procedure is linear in $\TT$, i.e.,
$\RT_1(\TT) = \TT \RT_1(1) =:\TT \RT_1$.
Then
\begin{equation}\label{eq:totalRT2}
 \RT_T = \BO{ \frac{\beta}{\K\TT} (\TT \RT_1  + \RT_2)   }.
\end{equation}
Numerical values of $\RT_1$ and $\RT_2$ could be estimated, given problem sparsity and underlying hardware, or can be measured during the run.

{\bf Optimal choice of sampling parameter $\TT$.}
In the previous paragraph, we gave an estimate of the complexity of a single iteration. In this paragraph,
we answer the question of how to choose a $\TT$ given times $\RT_1, \RT_2$. For variable $\beta$, we have more options,
but we stick to the most general one given in \eqref{eq:betaForGeneralCase}. Given that $s \geq 2$, we have
\begin{align}\label{eq:asvfwiuj32f2q}
\RT_T &= \BO{  \tfrac{1 +\frac{(\xi-1)(\TT-1)}{s-1}
 +    (\K-1)
   \frac{\xi \TT}{s }}{\K} \left( r_{1,2}  +\frac1{\TT} \right) \RT_2}
=
\BO{   \left(\frac{s}{\xi\K} + \TT  \right)
 \left( r_{1,2}  +\frac1{\TT} \right)},
\end{align}
 where $r_{1,2}=\frac{\RT_1}{\RT_2}$ is a work to communication ratio.
 The optimal parameter $\TT^*$ can be obtain by minimizing \eqref{eq:asvfwiuj32f2q} and is given by
 \begin{equation}
\label{eq:optimalTTParameter}
\TT^* = \sqrt{\frac{s}{r_{1,2} \ \xi\ \K }}.
 \end{equation}
Therefore, smaller values of $r_{1,2}$ imply that we should do more work in each iteration, and hence bigger values of $\TT$ should be chosen.
This is quite natural, as one should tune the parameters in such a way that time spent in communication should be in comparable with that of effective computation.

\begin{figure}[tp]
 \centering
 \includegraphics[width=4in]{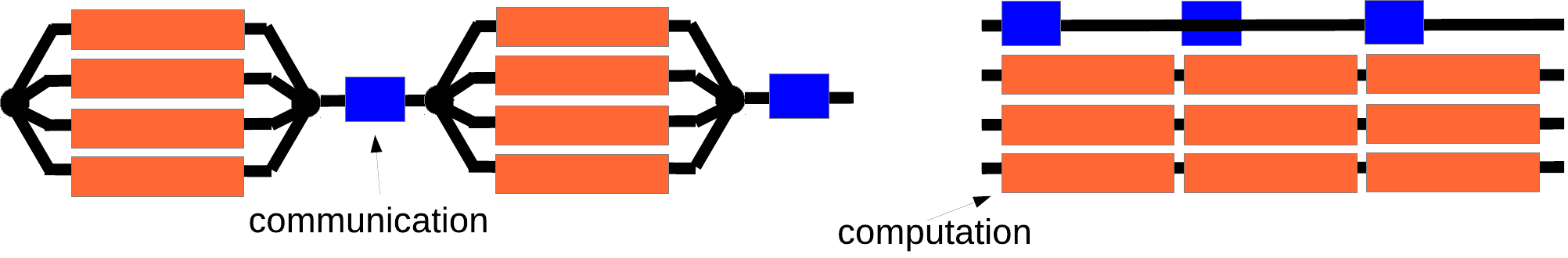}
 \caption{An illustration of a na\"{i}ve (PS) approach (left), which alternates between parallel regions, where computations take place, and serial regions dedicated to MPI communications with other computers. An alternative (FP) approach (right) dedicates the communication task to one thread and uses other threads for computation.}
 \label{fig:serialBlocksAndParallel}
\end{figure}

{\bf Message Passing Interface (MPI).}
In order to discuss finer details of the implementations, we need to introduce 
the architecture we use. We use OpenMP \cite{OpenMP} for dealing with concurrency within a single computer and Message Passing Interface (MPI) \cite{MPI} as the abstraction layer for network communication.
In MPI, one passes data from one MPI process to another MPI process, which
may run on another computer. (We disregard the concept of groups for brevity.)
Communication can involve any subset of computers, which run MPI processes.
Communication can be either blocking (``synchronous'') or non-blocking (``asynchronous''). 
A \emph{collective} operation
involves the communication among two or more MPI processes. An example of a collective operation is a {\it barrier}, where computers wait until all of them reach the same point in the algorithm.
Another common collective operation is {\it reduce all},
which is parametrized by an arbitrary operation that takes a set of elements and produces a single element of the same type.
This ``reduce'' operation is applied to all elements of the particular type stored across all MPI processes and the result is returned to all MPI processes.
For example, let us assume that each computer stores a vector $\vc{\delta g}{\Kidx}\in\R^m$ and the goal is to sum it up, i.e., to compute $\vc{\delta g}{1,\dots,\K}=\sum_{\Kidx=1}^\K \vc{\delta g}{\Kidx}$ and to make this result available on each computer.
Figure \ref{fig:reduceAll} shows a standard approach, which leads to the desired result.
From the performance point of view, however, the use of {\it reduce all} should be minimized,
as it involves an implicit synchronisation and leaves most of the computers idle throughout the collective operation.

\begin{figure}[tp]
 \centering
 \includegraphics[width=2.5in]{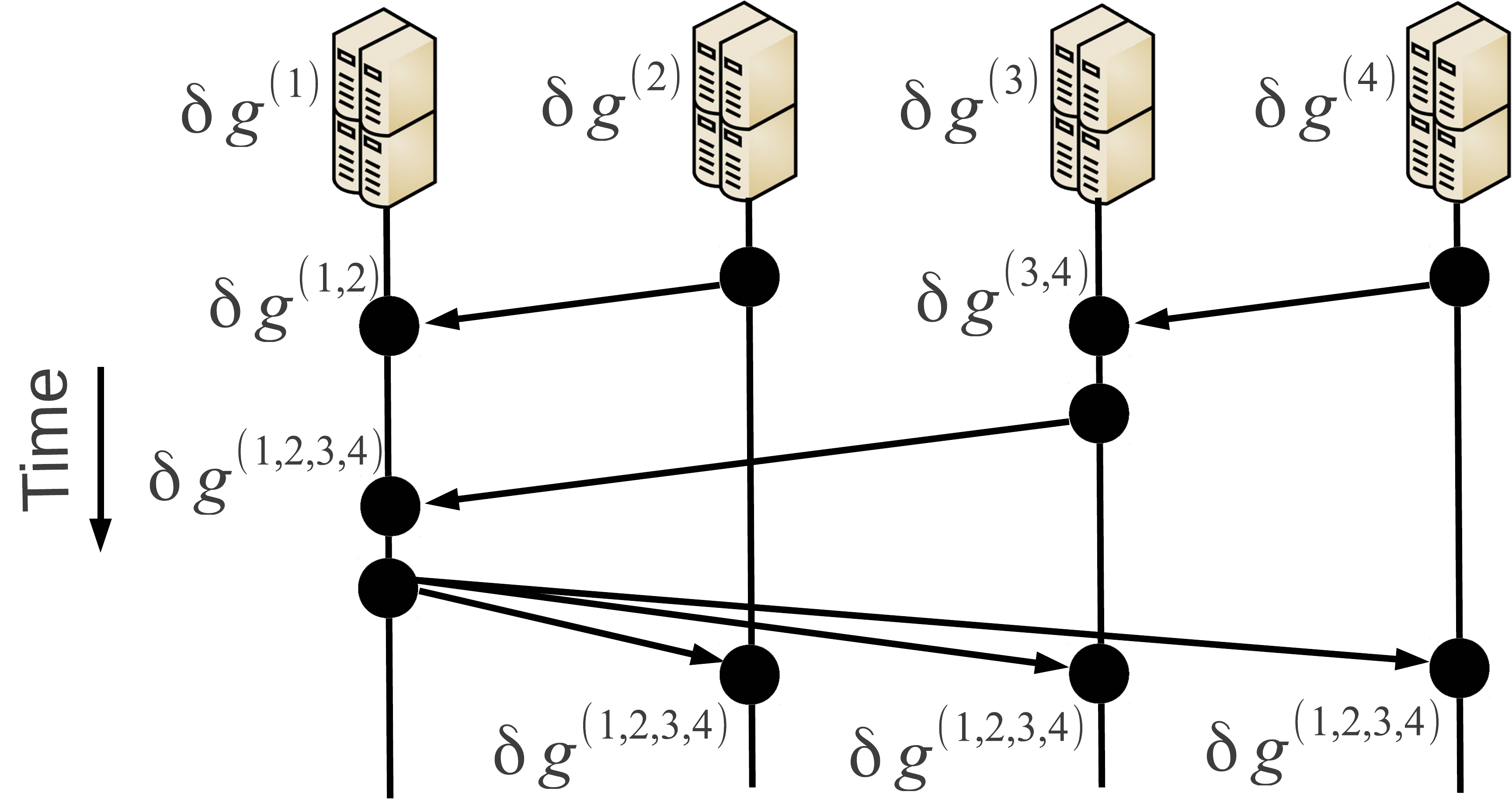}
 \caption{Schematic diagram of a standard reduce all implementation. The goal is to compute $\sum_{\Kidx=1}^{\K} \vc{\delta g}{\Kidx}$.  The arrows show data flow between computers.}
 \label{fig:reduceAll}
\end{figure}

This suggests the following range of progressively better-performing variants:

{\bf Alternating Parallel and Serial regions (PS).}
The na\"{i}ve implementation alternates two sub-procedures. One, which is computationally heavy and is done in parallel, but with no MPI communication, and another one, which is purely communicational. As an easy fix, one can dedicate one thread to the communication and other threads within the same computer to computation. We call this approach {\bf Fully Parallel} (FP)). Figure \ref{fig:serialBlocksAndParallel} compares the na\"{i}ve strategy (left) with the FP (right),

{\bf Reduce All (RA).} As mentioned above, the use of {\it reduce all} operations significantly decreases the performance of many distributed algorithms. It is, however, the preferred form of communication between computers close to each other in the computer network, such as computers directly connected by a network cable.
The use of asynchronous methods is also preferred over synchronous methods.

{\bf Asynchronous StreamLined (ASL).} We propose another pattern of communication, where each computer in one iteration sends only one message to the closest computer, asynchronously, and receives only one message  from another computer close-by, asynchronously. The communication hence takes place in an ring. This tweak, however, requires a significant change in the algorithm.
 Figure \ref{fig:ASL} illustrates the data flow of messages at the end of  iteration $k$ for $\K=4$.
 \begin{figure}[tp]
  \sidecaption
 \includegraphics[width=7.3cm]{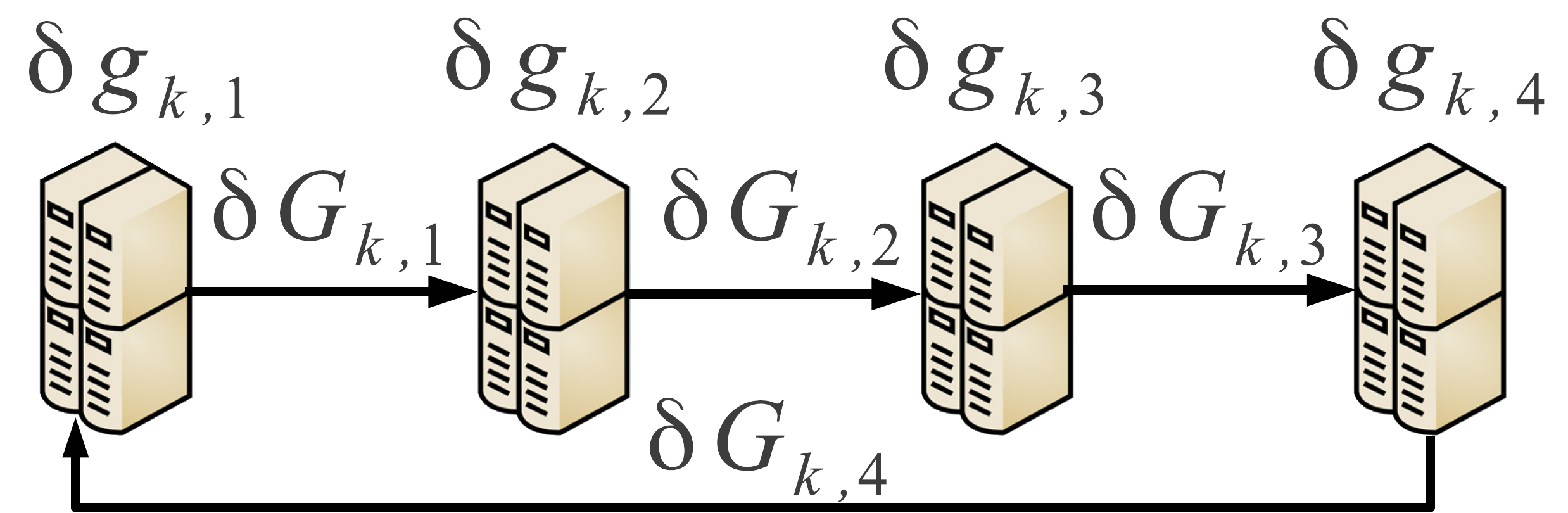}
 \caption{Illustration of ASL method for $\K=4$.
 During $k$-th iteration, computer $\Kidx$ obtains its contribution $\vc{\delta g_k}{\Kidx}$
 but asynchronically sends an accumulated update $\vc{\delta G_k}{\Kidx}$ to its successor.}
 \label{fig:ASL}
\end{figure}
 We fix an order of computers in a ring, denoting $\predecessor_R(\Kidx)$ and $\successor_R(\Kidx)$ the two computers neighbouring computer $\Kidx$
along the two directions on the ring.
Computer $\Kidx$ always receives data only from computer $\predecessor_R(\Kidx)$ and sends data only to computer $\successor_R(\Kidx)$.
Let us denote by $\vc{\delta G_k}{\Kidx}$ the data, which
computer $\Kidx$ sends to computer $\successor_R(\Kidx)$ at the end of iteration $k$.
When computer $\Kidx$ starts iteration $k$, it has already received $\vc{\delta G_{k-1}}{\predecessor_R(\Kidx)}$.\footnote{For the start of the algorithm we define $\vc{\delta g_l}{\Kidx}=\vc{\delta G_{l}}{\Kidx}={\bf 0}$ for all $l < 0$.}
Hence the data, which will be sent at the end of iteration $k$ by computer $\Kidx$ are:
\begin{equation}\label{eq:cumulativeupdate}
 \vc{\delta G_k}{\Kidx} = \vc{\delta G_{k-1}}{\predecessor_R(\Kidx)} - \vc{\delta g_{k-\K}}{\Kidx} + \vc{\delta g_k}{\Kidx}.
\end{equation}
It should be noticed that at the end of each iteration in the ASL procedure, each computer has a different vector $g_k$, which we denote $\vc{g_k}{\Kidx}$. The update rule is 
\begin{equation}
 \vc{g_{k+1}}{\Kidx} = \vc{g_{k}}{\Kidx} + \vc{\delta g_k}{\Kidx}
  + \vc{\delta G_{k}}{\predecessor_R(\Kidx)} - \vc{\delta g_{k-\K+1}}{\Kidx}.
\end{equation}
The clear advantage of the ASL method is a decrease in communication time. On the other hand it comes with a cost of slower propagation of information. Indeed, it takes $\K-1$ iterations to propagate information to all computers. It also comes with bigger storage requirements, as at iteration $k$, we have to have all vectors $\vc{\delta g_l}{\Kidx}$ for $k-\K \leq l \leq k$ stored on computer $\Kidx$.

{\bf \bf Asynchronous Torus (AST).}
There is a compromise solution, though, which inherits many desirable features of both RA and ASL.
This employs a toroidal networking topology, which is common in high-performance computing (HPC) in general, and HPC using InfiniBand networks \cite{InfiniBand}, in particular.
Let us assume that $\K$ is a multiple of $r\in\N$,  where $r$ represents the width of a torus, i.e., $\K$  computers are partitioned into subsets $R_i$ each with size $r$.
Each group $R_i$ has a root computer. These root computers aggregate updates from their respective groups, e.g., using a local {\it reduce all} operation, in each iteration and exchange those update in an asynchronous ring with two other adjacent root computers.
Thus the communication between the root nodes follows the ASL communication pattern. The AST approach decreases the propagation time from $\K$ to $\frac\K r$, additional storage is also decrease by factor $r$, and the overall communication complexity remains low.

{\bf The Comparison.}
Changing from the FP approach to the PS approach does not require much computational or storage overhead, but can reduce the idle time of processors.
However, changing from RA to SLA or AST brings significant storage requirements, while it reduces both communication and idle time significantly.
Table \ref{tbl:runtimeAndStorage} summarize maximum memory requirements on each single node of the cluster, time spent in communication,
and amount of data transferred over the network. Once the time spent in communication is measured or estimated, one can pick the most appropriate strategy.
Notice that the wall-clock time required for the {\it reduce all} operation, $\RT_{ra}$, is 
typically of the order
$\mathcal{O}(\log \K) \cdot \RT_{p2p},$ where $\RT_{p2p}$ is the time required by the point-to-point transmission.
\begin{table}[tp]

 \caption{Summary of additional memory and computation requirements for strategies RA, SLA, AST.}
 \label{tbl:runtimeAndStorage}

 \centering
 \begin{tabular}{c|c|c|c}
  strategy & memory for $g$'s & communication & extra computation
  \\ \hline \hline
  RA & $2m$ & $\RT_{ra}$ & 0
  \\
  SLA & $(2+\K)m$ &  $\RT_{p2p}$ & $4 m$ additions
  \\
  AST & $(2+ \K / r)m$ & $\RT_{p2p}$ + $\RT_{ra}/ r$ & $8 m$ additions
 \end{tabular}
\end{table}

\section{Numerical experiments}
\label{sec:results}

In this  section we present numerical evidence of the efficiency of the  distributed (block) coordinate-descent method. 

{\bf The code.}
The code of the distributed (block) coordinate-descent solver is part of our AC-DC library, available at \url{http://code.google.com/p/ac-dc/}. 
The library is written in C++ using OpenMP. 
The extensive use of template classes, Boost::MPI, and Boost.Serialization makes it easy to change the composite function and the precision of the computation.
Both wall-clock and CPU-time were measured using Boost::Timers,
which achieve nano-second accuracy on recent processors running recent versions of Linux.

{\bf The facility.} 
Our empirical tests were conducted in UK's high-performance computing facility, HECToR, equipped with multi-core computers connected using Infiniband \cite{InfiniBand}.
In particular, in Phase 3 of the facility, which is a Cray XE6 cluster, we have used up to 128 nodes,
 equipped with two AMD Opteron Interlagos 16-core processors and 32 GB of memory each.
This gave us 4,096 cores in total,
interconnected using Cray Gemini routers in a 3D torus.
Each Gemini router was connected to processors and random-access memory 
 of two nodes via HyperTransport links.
Each router is then connected to ten other routers.
In practice, the latency is about 1--1.5 microseconds and
 the capacity of each link is 8~GBs$^{-1}$.
The facility ran a Cray Linux Environment, based on SuSE Linux.


{\bf Support Vector Machines (SVM).}
One of the goals of this paper is to train huge sparse support-vector
machines (SVM) that do not fit into the memory of a single computer. In the machine learning literature, one often performs experiments on instances of moderate size, e.g., 100 MB  \cite{pegasos,DCD,TR:MINIBATCH}.
Well-known instances of this scale include, e.g., CCAT variant of RCV1 \cite{Lewis2004}, Astro-ph \cite{pegasos}, and COV \cite{pegasos}. 
In this Section, we focus on a larger dataset, known as WebSpam \cite{libsvm}. 
This dataset consists of 350,000 observations (rows) and 16,609,143 features (columns).
The size of the instance is 25 GB.
Figure \ref{fig:SVMDistributedresults} show the execution time and duality gap
for WebSpam dataset, using $\K=16$ MPI processes, with each process using 8 threads. 
$\tau$ is the number of coordinates updated by one MPI process during one iteration. As expected, the main run-time cost it not computing the updates, but updating $g$.
 \begin{figure}[tp]
 \sidecaption
\includegraphics[width=6cm]{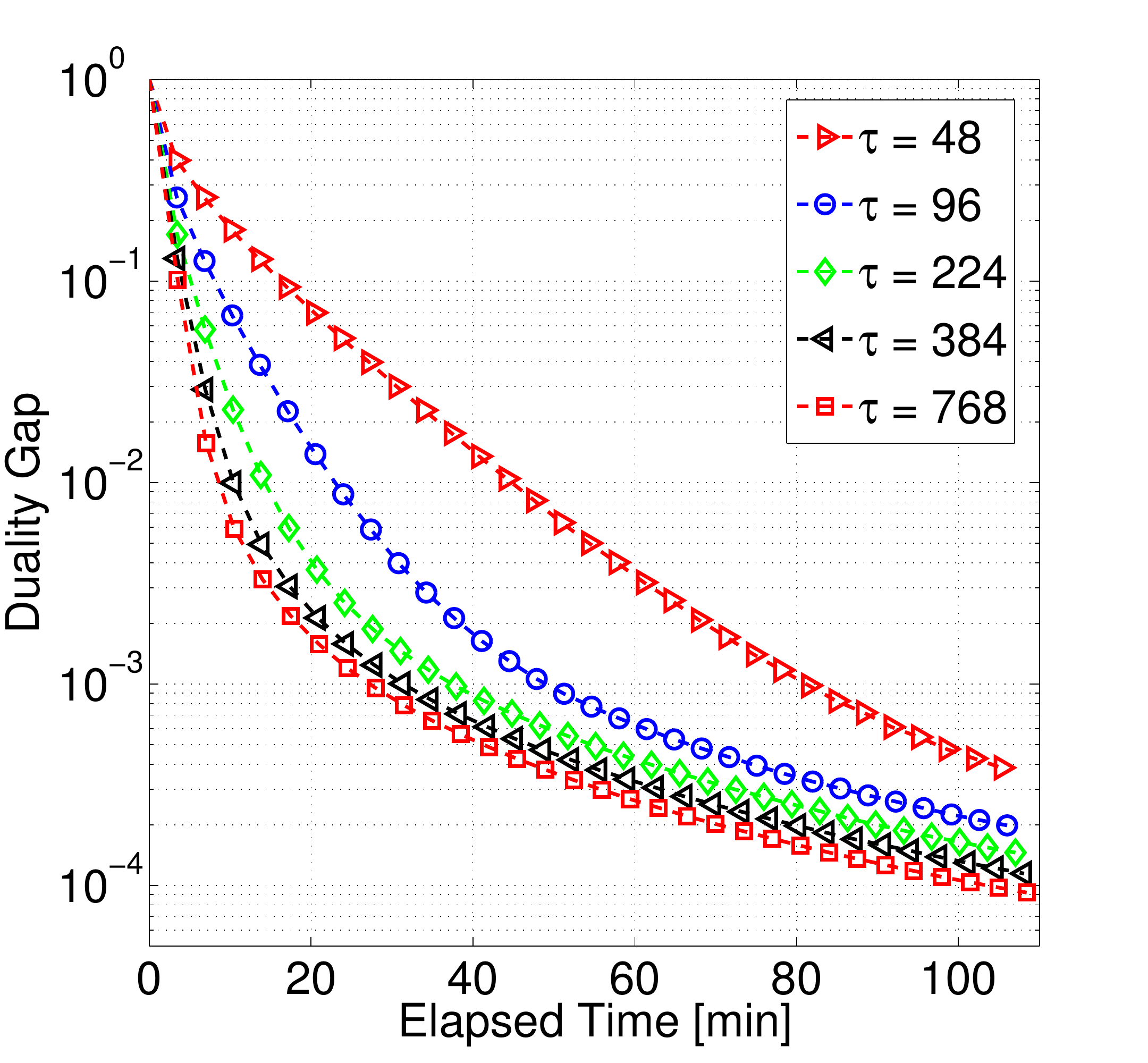}
 \caption{Evolution of duality gap for the WebSpam dataset for various choices of $\tau$.}
 \label{fig:SVMDistributedresults}
 \end{figure}
Let us remark that $\epsilon$ is usually not 
particularly small in the machine-learning community. In experimenting with small $\epsilon$, we just wanted to demonstrate that our algorithm is able to close the duality gap within the limits of machine precision.
The truly important measures of the performance of the classifier, e.g., 0-1 loss or prediction error, are actually within 10 \% after the first minute, which is the first time we compute it. In practice,
 a duality gap of 0.1 or 0.01 can be sufficient for machine learning problems.

{\bf Sparse least squares (LASSO).}
Next, we solved an artificial instance of sparse least squares with a matrix 
of $n = 10^9$ rows and $d = 5 \cdot 10^8$ columns in block-angular form:
\begin{equation}\label{eq:block_ang}
A=\left(
\begin{BMAT}(rc){c;c;c;c}{ccc;c}
A^{(1)}_{loc} & 0   & \cdots & 0\\
0 & A^{(2)}_{loc}   & \cdots & 0\\
\vdots & \vdots    & \ddots & \vdots\\
A_{glob}^{(1)} & A_{glob}^{(2)}   & \cdots & A_{glob}^{(\K)}
\end{BMAT}
\right).
\end{equation}
requiring 3 TB to store.
Such matrices often arise in stochastic optimization. 
We used 128 nodes with 4 MPI processes on each node.
Each MPI process ran 8 OpenMP threads, giving a total of
4,096 hardware threads. 
Each node $\Kidx$ stored two matrices: $A_{loc}^{(\Kidx)} \in \R^{1,952,148 \times 976,562}$
and $A_{glob}^{(\Kidx)} \in \R^{500,224 \times 976,562}$. The average number of non-zero elements per row is $175$ and $1,000$ for 
$A^{(\Kidx)}_{loc}$ and
$A_{glob}^{(\Kidx)}$, respectively. 
When communicating $g_{k}^{(\Kidx)}$, only entries corresponding to the global part of $A^{(\Kidx)}$  need to be communicated, and hence in RA, a {\it reduce all} operation is applied to vectors $\delta g_{glob}^{(\Kidx)} \in \R^{500,224}$. In ASL, vectors with the same length are sent.
The optimal solution $x^*$ has exactly $160,000$ nonzero elements. 
Figure~\ref{fig:ASL_vs_RA} compares the evolution of $F(x_k)-F^*$ for  ASL-FP and RA-FP.

{
\begin{figure}[htp]
 \sidecaption
 \centering
 \includegraphics[width=6cm]{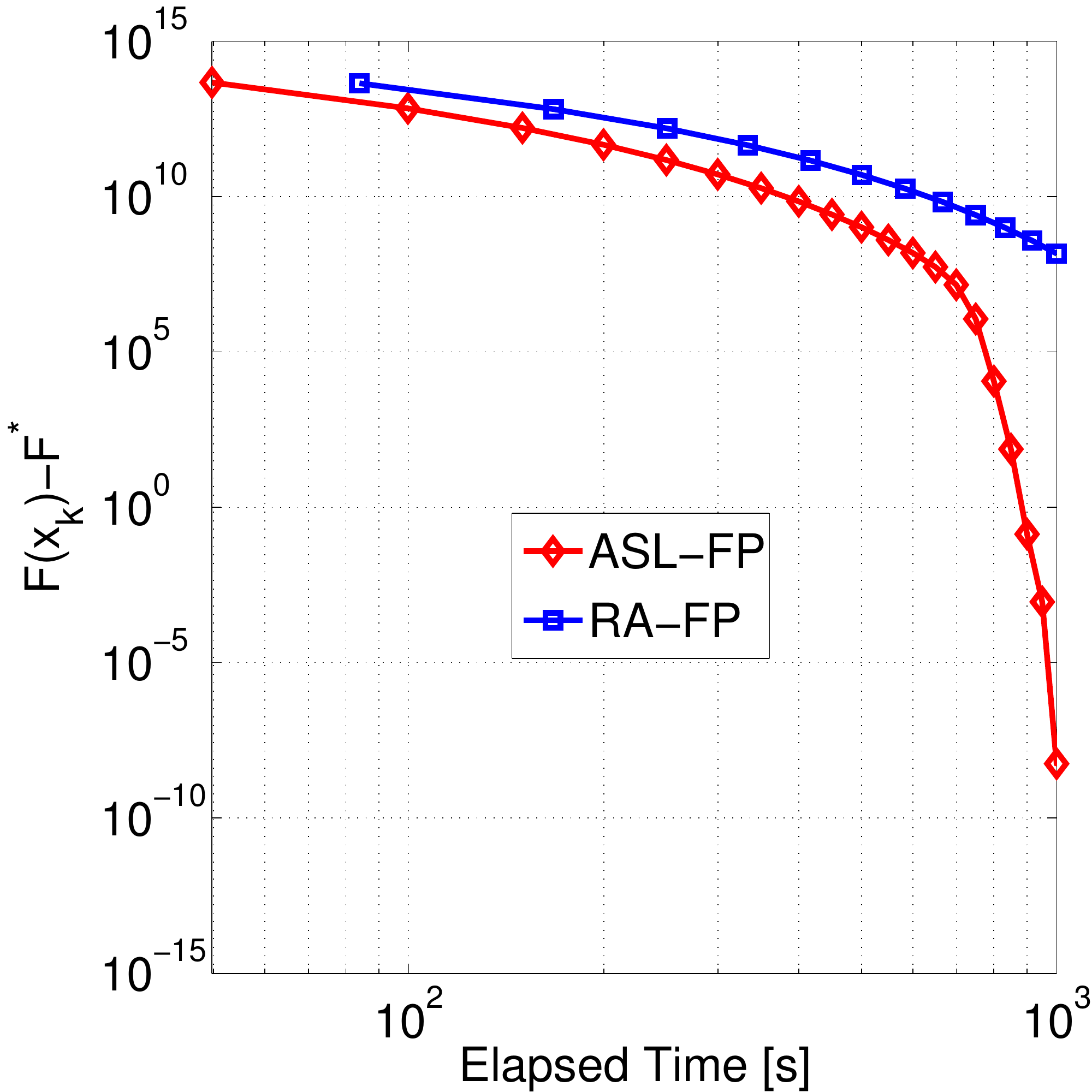}
 \caption{Evolution of $F(x_k)-F^*$ in time. ASL-FP significantly  outperforms  RA-FP. The loss $F$ is pushed down by  25 degrees of magnitude in less than 30 minutes (3TB problem).}
 \label{fig:ASL_vs_RA}
\end{figure}
}

\newpage
\section{Conclusions}
Overall, distributed algorithms can be both very efficient and
easy to implement, when one picks the right approach.
The first steps taken by the present authors over the past two years
seem to have been validated by the considerable interest \cite{richtarik2013distributed, fercoq2014fast, jaggi2014communication}
they have generated.

\bibliographystyle{ieeetr}
\bibliography{literature}

\begin{thebibliography}{10}

\bibitem{RT:PCDM}
P.~Richt{\'a}rik and M.~Tak{\'a}{\v{c}}, ``Parallel coordinate descent methods
  for big data optimization,'' {\em arXiv:1212.0873}, 2012.

\bibitem{Bertsekas1989}
D.~P. Bertsekas and J.~N. Tsitsiklis, {\em Parallel and Distributed
  Computation: Numerical Methods}.
\newblock Upper Saddle River, NJ, USA: Prentice-Hall, Inc., 1989.

\bibitem{fercoq2013accelerated}
O.~Fercoq and P.~Richt{\'a}rik, ``Accelerated, parallel and proximal coordinate
  descent,'' {\em arXiv:1312.5799}, 2013.

\bibitem{Saha10finite}
A.~Saha and A.~Tewari, ``On the finite time convergence of cyclic coordinate
  descent methods,'' {\em {SIAM} J. Optimiz.}, vol.~23, no.~1, pp.~576--601,
  2013.

\bibitem{RT:TTD2011}
P.~Richt{\'a}rik and M.~Tak{\'a}{\v{c}}, ``Efficient serial and parallel
  coordinate descent methods for huge-scale truss topology design,'' in {\em
  Operations Research Proceedings 2011}, pp.~27--32, Springer, 2012.

\bibitem{Nesterov:2010RCDM}
Y.~Nesterov, ``Efficiency of coordinate descent methods on huge-scale
  optimization problems,'' {\em {SIAM} J. Optimiz.}, vol.~22, no.~2,
  pp.~341--362, 2012.

\bibitem{RT:UCDC}
P.~Richt{\'a}rik and M.~Tak{\'a}{\v{c}}, ``Iteration complexity of randomized
  block-coordinate descent methods for minimizing a composite function,'' {\em
  Math. Program.}, vol.~144, no.~1-2, pp.~1--38, 2014.

\bibitem{fercoq2013smooth}
O.~Fercoq and P.~Richt{\'a}rik, ``Smooth minimization of nonsmooth functions
  with parallel coordinate descent methods,'' {\em arXiv:1309.5885}, 2013.

\bibitem{lu2013complexity}
Z.~Lu and L.~Xiao, ``On the complexity analysis of randomized block-coordinate
  descent methods,'' {\em arXiv preprint arXiv:1305.4723}, 2013.

\bibitem{necoara2013distributed}
I.~Necoara and D.~Clipici, ``Distributed coordinate descent methods for
  composite minimization,'' {\em arXiv:1312.5302}, 2013.

\bibitem{lee2013efficient}
Y.~T. Lee and A.~Sidford, ``Efficient accelerated coordinate descent methods
  and faster algorithms for solving linear systems,'' in {\em 54th Annual
  Symposium on Foundations of Computer Science}, IEEE, 2013.

\bibitem{Bertsekas-Book}
D.~P. Bertsekas, {\em {Nonlinear Programming}}.
\newblock Athena Scientific, 2nd~ed., Sept. 1999.

\bibitem{Tseng:CCMCDM:Smooth}
Z.~Q. Luo and P.~Tseng, ``A coordinate gradient descent method for nonsmooth
  separable minimization,'' {\em J. Optim. Theory Appl.}, vol.~72, January
  2002.

\bibitem{Tseng:CGDM:Nonsmooth}
P.~Tseng and S.~Yun, ``A coordinate gradient descent method for nonsmooth
  separable minimization,'' {\em Math. Program.}, vol.~117, pp.~387--423, July
  2008.

\bibitem{Tseng:CGDMLC:Nonsmooth}
P.~Tseng and S.~Yun, ``Block-coordinate gradient descent method for linearly
  constrained nonsmooth separable optimization,'' {\em J. Optim. Theory Appl.},
  vol.~140, pp.~513--535, 2009.

\bibitem{Tseng:CBCDM:Nonsmooth}
P.~Tseng, ``Convergence of a block coordinate descent method for
  nondifferentiable minimization,'' {\em J. Optim. Theory Appl.}, vol.~109,
  no.~3, pp.~475--494, 2001.

\bibitem{rachael:Improved}
R.~Tappenden, P.~Richt{\'a}rik, and M.~Tak{\'a}{\v{c}}, ``Improved complexity
  analysis of parallel coordinate descent methods,'' 2014.
\newblock Technical Report, the University of Edinburgh.

\bibitem{T2}
R.~Tappenden, P.~Richt{\'a}rik, and J.~Gondzio, ``Inexact coordinate descent:
  complexity and preconditioning,'' {\em arXiv:1304.5530}, 2013.

\bibitem{liu2013asynchronous}
J.~Liu, S.~J. Wright, C.~R{\'e}, and V.~Bittorf, ``An asynchronous parallel
  stochastic coordinate descent algorithm,'' {\em arXiv:1311.1873}, 2013.

\bibitem{richtarik2013optimal}
P.~Richt{\'a}rik and M.~Tak{\'a}{\v{c}}, ``On optimal probabilities in
  stochastic coordinate descent methods,'' {\em arXiv:1310.3438}, 2013.

\bibitem{scherrer2012feature}
C.~Scherrer, A.~Tewari, M.~Halappanavar, and D.~Haglin, ``Feature clustering
  for accelerating parallel coordinate descent.,'' {\em Advances in Neural
  Information Processing Systems}, vol.~25, pp.~28--36, 2012.

\bibitem{T1}
R.~Tappenden, P.~Richt{\'a}rik, and B.~B\"{u}ke, ``Separable approximations and
  decomposition methods for the augmented lagrangian,'' {\em Optim. Method.
  Softw.}, 2014.
\newblock arXiv:1308.6774.

\bibitem{zhao2014stochastic}
P.~Zhao and T.~Zhang, ``Stochastic optimization with importance sampling,''
  {\em arXiv:1401.2753}, 2014.

\bibitem{hogwild}
F.~Niu, B.~Recht, C.~R{\'e}, and S.~J. Wright, ``Hogwild!: A lock-free approach
  to parallelizing stochastic gradient descent,'' {\em Advances in Neural
  Information Processing Systems}, vol.~24, pp.~693--701, 2011.

\bibitem{DCD}
C.-J. Hsieh, K.-W. Chang, C.-J. Lin, S.~S. Keerthi, and S.~Sundararajan, ``A
  dual coordinate descent method for large-scale linear {SVM},'' in {\em
  Proceedings of the 25th International Conference on Machine Learning}, ICML
  '08, (New York, NY, USA), pp.~408--415, ACM, 2008.

\bibitem{richtarik2013distributed}
P.~Richt{\'a}rik and M.~Tak{\'a}{\v{c}}, ``Distributed coordinate descent
  method for learning with big data,'' {\em arXiv:1310.2059}, 2013.

\bibitem{fercoq2014fast}
O.~Fercoq, Z.~Qu, P.~Richt{\'a}rik, and M.~Tak{\'a}{\v{c}}, ``Fast distributed
  coordinate descent for non-strongly convex losses,'' {\em IEEE Workshop on
  Machine Learning for Signal Processing}, 2014.

\bibitem{6122723}
N.~S.~M. Salleh, A.~Suliman, and A.~R. Ahmad, ``Parallel execution of
  distributed {SVM} using {MPI} ({CoDLib}),'' in {\em Information Technology
  and Multimedia (ICIM)}, pp.~1--4, IEEE, 2011.

\bibitem{Chang_psvm:parallelizing}
E.~Y. Chang, K.~Zhu, H.~Wang, H.~Bai, J.~Li, Z.~Qiu, and H.~Cui, ``{PSVM}:
  Parallelizing support vector machines on distributed computers,'' {\em
  Advances in Neural Information Processing Systems}, vol.~20, 2007.

\bibitem{Alham20112801}
N.~K. Alham, M.~Li, Y.~Liu, and S.~Hammoud, ``A {MapReduce}-based distributed
  {SVM} algorithm for automatic image annotation,'' {\em Comput. Math. Appl.},
  vol.~62, no.~7, pp.~2801 -- 2811, 2011.

\bibitem{NesterovBook}
Y.~Nesterov, {\em Introductory lectures on convex optimization}, vol.~87 of
  {\em Applied Optimization}.
\newblock Kluwer, 2004.

\bibitem{MR1320206}
B.~K. Natarajan, ``Sparse approximate solutions to linear systems,'' {\em SIAM
  journal on computing}, vol.~24, no.~2, pp.~227--234, 1995.

\bibitem{MR2837883}
D.~Ge, X.~Jiang, and Y.~Ye, ``A note on the complexity of $\ell_p$
  minimization,'' {\em Math. Program.}, vol.~129, no.~2, pp.~285--299, 2011.

\bibitem{patrascu2014random}
A.~Patrascu and I.~Necoara, ``Random coordinate descent methods for $\ell_0$
  regularized convex optimization,'' {\em arXiv:1403.6622}, 2014.

\bibitem{shalev2013stochastic}
S.~Shalev-Shwartz and T.~Zhang, ``Stochastic dual coordinate ascent methods for
  regularized loss,'' {\em J. Mach. Learn. Res.}, vol.~14, no.~1, pp.~567--599,
  2013.

\bibitem{TR:MINIBATCH}
M.~Tak{\'a}\v{c}, A.~S. Bijral, P.~Richt{\'a}rik, and N.~Srebro, ``Mini-batch
  primal and dual methods for {SVMs},'' {\em J. Mach. Learn. Res.}, vol.~W\&CP
  28, pp.~1022--1030, 2013.

\bibitem{jaggi2014communication}
M.~Jaggi, V.~Smith, M.~Tak{\'a}{\v{c}}, J.~Terhorst, T.~Hofmann, and M.~I.
  Jordan, ``Communication-efficient distributed dual coordinate ascent,'' {\em
  Advances in Neural Information Processing Systems}, vol.~27, 2014.

\bibitem{OpenMP}
{OpenMP Architecture Review Board}, {\em OpenMP Application Program Interface}.
\newblock 2011.

\bibitem{MPI}
M.~Snir, S.~Otto, S.~Huss-Lederman, D.~Walker, and J.~Dongarra, {\em MPI-The
  Complete Reference, Volume 1: The MPI Core}.
\newblock Cambridge, MA, USA: MIT Press, 2nd. (revised)~ed., 1998.

\bibitem{InfiniBand}
{InfiniBand Trade Association}, {\em InfiniBand Architecture Specification,
  Volume 1, Release 1.0}.
\newblock 2005.

\bibitem{pegasos}
S.~Shalev-Shwartz, Y.~Singer, N.~Srebro, and A.~Cotter, ``Pegasos: Primal
  estimated sub-gradient solver for {SVM},'' {\em Math. Program.}, vol.~127,
  no.~1, pp.~3--30, 2011.

\bibitem{Lewis2004}
D.~D. Lewis, Y.~Yang, T.~G. Rose, and F.~Li, ``Rcv1: A new benchmark collection
  for text categorization research,'' {\em J. Mach. Learn. Res.}, vol.~5,
  pp.~361--397, Dec. 2004.

\bibitem{libsvm}
L.~Data, 25/9/2014.
\newblock {\small \tt http://www.csie.ntu.edu.tw/${}^{\sim}$cjlin/
  libsvmtools/datasets/binary.html}.

\end{thebibliography}

\newpage
\appendix

\section*{Notation Glossary} 

\begin{center}

\begin{tabular}{cp{10cm}c}
\multicolumn{1}{c}{$\qquad\qquad\qquad$}&  \multicolumn{1}{c}{}&  \multicolumn{1}{c}{$\qquad\qquad\qquad$}\\
& {\bf Optimization problem}  & \\[2mm]
$N$ & dimension of the optimization variable & \eqref{eq:P}\\
 $x, h$ &  vectors in $\R^N$ & \\
$F$ & $F=f+\cPsi$ (loss / objective function)  & \eqref{eq:P}\\
$F^*$  & optimal value, we assume $F^*> -\infty$ & \\
$f$ & smooth convex function ($f: \R^N \to \R$) & \eqref{eq:P}\\
$\cPsi$ & convex block separable function ($\cPsi: \R^N \to \R\cup \{+\infty\}$) & \eqref{eq:P}\\[2mm]

& {\bf Block structure} &\\[2mm]
$n$ & number of blocks & \\
$[n]$ & $[n]=\{1,2,\dots,n\}$ (the set of blocks) & Sec~\ref{sec:block_structure}\\
$N_i$ & dimension of block $i$ ($N_1+\dots+N_n = N$) & Sec~\ref{sec:block_structure}\\
$U_i$ & an $N_i \times N$ column submatrix of the $N \times N$ identity matrix& Sec~\ref{sec:block_structure}\\
$x^{(i)} $ & $x^{(i)}=U_i^T x \in\R^{N_i}$ (block $i$ of vector $x$)&Sec~\ref{sec:block_structure}\\
$\nabla_i f(x)$ & $\nabla_i f(x) = U_i^T \nabla f(x)$ (block gradient of $f$ associated with block $i$)& Sec~\ref{sec:block_structure}\\

$L_i$ & block Lipschitz constant of the gradient of $f$ & \eqref{eq:f_iLipschitzder}\\
$L$ & $L = (L_1,\dots,L_n)^T \in \R^n$ (vector of block Lipschitz constants)&\\
$w$ & $w = (w_1,\dots,w_n)^T \in \R^n$ (vector of positive weights) &\\

$\|x\|_w$ &  $\|x\|_w=(\sum_{i=1}^n w_i \|x^{(i)}\|^2_{(i)})^{1/2}$ (weighted norm associated with $x$)& \eqref{eq:norms}\\
$\cPsi_i$ & $i$-th componet of $\cPsi = \Psi_1 + \dots + \cPsi_n$ & \eqref
{eq:Psi_block_def}\\
$\mu_{\cPsi}(W)$ & strong convexity constant of $\cPsi$ with respect to the norm $\|\cdot\|_w$ & \eqref{eq:strong_def}\\
$\mu_f(W)$ & strong convexity constant of $f$ with respect to the norm $\|\cdot\|_w$ & \eqref{eq:strong_def}\\

$ J$ & subset  of $\{1,2,\dots,n\}$ & \\
$x_{[Z]}$ & vector in $\R^N$ formed from $x$ by zeroing out blocks $x^{(i)}$ for $i \notin Z$ & \eqref{eq:lllop09} \\[2mm]
 
& {\bf Block samplings} & \\[2mm]

$\omega$ & degree of partial separability of
 $f$ & Assumption  \ref{ass:f}\\

$\hat{Z}, Z_k$ & distributed block samplings (random subsets of $\{1,2,\dots,n\}$) &  Sec \ref{sec:algorithm}\\

$\K$ & number of nodes (partitions) &  Sec \ref{sec:algorithm}
\\
$\tau$ & \# of blocks updated in 1 iteration within one partition& \\

$\{\vc{\Partc}{\Kidx}\}_{\Kidx=1}^\K$ & partition of $[n]$ onto $\K$ parts  &  \\[2mm]
  
& {\bf Algorithm} & \\[2mm]

$\beta$ &  stepsize parameter depending on $f$ and $\hat{Z}$   & \\
  
$h^{(i)}(x)$ & $h^{(i)}(x) = (h(x))^{(i)} = \arg \min_{t \in \R^{N_i}} \ve{\nabla_i f(x)}{t} + \tfrac{\beta w_i}{2}\|t\|_{(i)}^2 + \cPsi_i(x^{(i)}+t)$ & \eqref{eq:h_definitionSeparablePart}\\
 
\end{tabular}
\end{center}

\end{document}